\newtheorem{theorem}{Theorem}[section]
\newtheorem{proposition}{Proposition}[section]
\newtheorem{corollary}{Corollary}[section]
\newtheorem{definition}{Definition}[section]
\newtheorem{example}{Example}[section]
\numberwithin{equation}{section}
\def\({\left ( }
\def\){\right )}
\def\<{\left < }
\def\>{\right >}
\begin{document}
\title[Transversal Lightlike Submanifolds]{Transversal Lightlike
Submanifolds of Metallic Semi-Riemannian Manifolds}
\author{Feyza Esra Erdo\u{g}an}
\address{Faculty of Education, Department of Mathematics, Ad\i yaman
University, 02040 Ad\i yaman, TURKEY}
\email{ferdogan@adiyaman.edu.tr}

\begin{abstract}
The Metallic Ratio is fascinating topic that continually generated news
ideas. A Riemannian manifold endowed with a Metallic structure will be
called a Metallic Riemannian manifold. The main purpose of the present paper
is to study the geometry of transversal lightlike submanifolds and radical
transversal lightlike submanifolds and of Metallic Semi-Riemannian
manifolds. We investigate the geometry of distributions and obtain necessary
and sufficient conditions for the induced connection on these manifolds to
be metric connection. We also obtain characterization of transversal
lightlike submanifolds of Metallic semi-Riemannian manifolds. Finally, we
give an example.
\end{abstract}

\maketitle

\section{INTRODUCTION}

Lightlike submanifolds are one of the most interesting topics in
differential geometry. It is well known that a submanifold of a Riemannian
manifold is always a Riemannian one. Contrary to that case, in
semi-Riemannian manifolds the induced metric by the semi-Riemann metric on
the ambient manifold is not necessarily nondegenerate. Since the induced
metric is degenerate\ on lightlike submanifolds, the tools which are used to
investigate the geometry of submanifolds in Riemannian case are not
favorable in semi-Riemannian case and so the classical theory can not be
used to define any induced object on a lightlike submanifold. The main
difficulties arise from the fact that the intersection of the normal bundle
and the tangent bundle of a lightlike submanifold is nonzero. In 1996,
K.Duggal-A.Bejancu \cite{DB} put forward the general theory of lightlike
submanifolds of semi-Riemannian manifolds in their book. In order to resolve
the difficulties that arise during studying lightlike submanifolds, they
introduced a non-degenerate distribution called screen distribution to
construct a lightlike transversal vector bundle which does not intersect to
its lightlike tangent bundle. It is well-known that a suitable choice of
screen distribution gives rises to many substantial results in lightlike
geometry. Many authors have studied the geometry of lightlike submanifolds%
\cite{FC,BC,FBR,FBR1,NE,SBE,BIS,BSE1,BIS1} in different manifolds . For
further read we refer \cite{DB,DS} and the references therein.

Manifolds with various geometric structures are convenient to study
submanifold theory. In recent years, one of the most studied manifold types
are Riemannian manifolds with metallic structures. Metallic structures on
Riemannian manifolds allow many geometric results to be given on a
submanifold.

As a generalization of the Golden mean which contains the Silver mean, the
Bronze mean, the Copper mean and the Nickel mean etc., Metallic means family
was introduced by V. W. de Spinadel \cite{V.W4} in 2002. The positive
solution of the equation given by 
\begin{equation*}
x^{2}-px-q=0,
\end{equation*}%
for some positive integer $p$ and $q$, is called a $(p,q)$-metallic number 
\cite{V.W1,V.W3} which has the form%
\begin{equation*}
\sigma _{p,q}=\frac{p+\sqrt{p^{2}+4q}}{2}.
\end{equation*}%
For $p=q=1$ and $p=2,$ $q=1$, it is well-known that we have the Golden mean $%
\phi =\frac{1+\sqrt{5}}{2}$ and Silver mean $\sigma _{2,1}=1+\sqrt{2}$,
respectively$.$ The metallic mean family plays an important role to
establish a relationship between mathematics and architecture. For example
golden mean and silver mean can be seen in the sacred art of Egypt, Turkey,
India, China and other ancient civilizations \cite{V.W5}.

S. I. Goldberg, K. Yano and N. C. Petridis in (\cite{GY} and \cite{GY1})
introduced polynomial structures on manifolds. As some particular cases of
polynomial structures C. E. Hretcanu and M. Crasmareanu defined Golden
structure \cite{GH,GH2,GH3,CM} and some generalizations of this, called
metallic structure \cite{GCS}. Being inspired by the Metallic mean, the
notion of Metallic manifold $\breve{N}$ was defined in \cite{GCS} by a $%
(1,1) $-tensor field $\breve{J}$ on $\breve{N},$ which satisfies $\breve{J}%
^{2}=p\breve{J}+qI$, where $I$ is the identity operator on the Lie algebra $%
\chi (\breve{N})$ of vector fields on $\breve{N}$ and $p$, $q$ are fixed
positive integer numbers. Moreover, if $(\breve{N},g)$ is a Riemannian
manifold endowed with a metallic structure $\breve{J}$ such that the
Riemannian metric $\breve{g}$ is $\breve{J}$-compatible, i.e., $\breve{g}(%
\breve{J}V,W)=\breve{g}(V,\breve{J}W),$ for any $V,W\in \chi (\breve{N})$,
then $(\breve{g},\breve{J})$ is called metallic Riemannian structure and$(%
\breve{N},\breve{g},\breve{J})$ is a Metallic Riemannian manifold. Metallic
structure on the ambient Riemannian manifold provides important geometrical
results on the submanifolds, since it is an important tool while
investigating the geometry of submanifolds. Invariant, anti-invariant,
semi-invariant, slant and semi-slant submanifolds of a Metallic Riemannian
manifold are studied in \cite{MH,MH1,MH2} and the authors obtained important
characterizations on submanifolds of Metallic Riemannian manifolds.

One of the most important subclasses of Metallic Riemannian manifolds is the
Golden Riemannian manifolds. Many authors have studied Golden Riemannian
manifolds and their submanifolds in recent years (see \cite%
{GH,GH2,GH3,BM,EC,M,FC3}). N. Poyraz \"{O}nen and E. Ya\c{s}ar \cite{NE}
initiated the study of lightlike geometry in Golden semi-Riemannian
manifolds, by investigating lightlike hypersurfaces of Golden
semi-Riemannian manifolds. B. E. Acet introduced lightlike hypersurfaces in Metallic semi-Riemannian manifolds \cite{B}.

Motivated by the studies on submanifolds of Metallic Riemannian manifolds
and lightlike submanifolds of semi-Riemannian manifolds, in the present
paper we introduce the transversal lightlike submanifolds of a Metallic
semi-Riemannian manifold.

\bigskip

Considering given brief above, in this paper, we introduce transversal
lightlike submanifolds of Metallic semi-Riemannian manifolds and studied
their differential geometry. The paper is organized as follows: In Section 2
is devoted to basic definitions needed for the rest of the paper. In Section
3 and Section 4 , we introduce a Metallic semi-Riemannian manifold along
with its subclasses, namely radical transversal and transversal lightlike
submanifolds and obtain some characterizations. We investigate the geometry
of distributions and find necessary and sufficient conditions for induced
connection to be a metric connection. Furthermore, we give an example.

\section{PRELIMINARIES}

A submanifold $\acute{N}^{m}$ immersed in a semi-Riemannian manifold $(%
\breve{N}^{m+k},\breve{g})$ is called a lightlike submanifold if it admits a
degenerate metric $g$\ induced from $\breve{g},$ whose radical distribution $%
Rad\,T\acute{N}$ is of rank $r$, where $1\leq r\leq m.$ Then $Rad\,T\acute{N}%
=T\acute{N}\cap T\acute{N}^{\perp }$ , where 
\begin{equation}
T\acute{N}^{\perp }=\cup _{x\in \acute{N}}\left\{ u\in T_{x}\breve{N}\mid 
\breve{g}\left( u,v\right) =0,\forall v\in T_{x}\acute{N}\right\} .
\end{equation}%
Let $S(T\acute{N})$ be a screen distribution which is a semi-Riemannian
complementary distribution of $Rad\,T\acute{N}$ in $T\acute{N}$ i.e., $T%
\acute{N}=Rad\,T\acute{N}\perp S(T\acute{N}).$

We consider a screen transversal vector bundle $S(T\acute{N}^{\bot }),$
which is a semi-Riemannian complementary vector bundle of $Rad\,T\acute{N}$
in $T\acute{N}^{\bot }.$ Since, for any local basis $\left\{ \xi
_{i}\right\} $ of $Rad\,T\acute{N}$, there exists a lightlike transversal
vector bundle $ltr(T\acute{N})$ locally spanned by $\left\{ N_{i}\right\} $ 
\cite{DB}. Let $tr(T\acute{N})$ be complementary( but not orthogonal) vector
bundle to $T\acute{N}$ in $T\breve{N}^{\perp }\mid _{\acute{N}}$. Then, we
have%
\begin{eqnarray*}
tr(T\acute{N}) &=&ltr\,T\acute{N}\bot S(T\acute{N}^{\bot }), \\
T\breve{N}\,|\,_{\acute{N}} &=&S(T\acute{N})\bot \lbrack Rad\,T\acute{N}%
\oplus ltr\,T\acute{N}]\perp S(T\acute{N}^{\bot }).
\end{eqnarray*}%
Although $S(T\acute{N})$ is not unique, it is canonically isomorphic to the
factor vector bundle $T\acute{N}/Rad\,T\acute{N}$ \cite{DB}.

The following result is important for this paper.

\begin{proposition}
The lightlike second fundamental forms of a lightlike submanifold $\acute{N}$
do not depend on $S(T\acute{N}),$ $S(T\acute{N}^{\perp })$ and $ltrT\acute{N}
$ \cite{DB}.
\end{proposition}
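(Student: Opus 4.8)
The plan is to show that the lightlike (i.e.\ $\operatorname{Rad}TN$-valued) part of the second fundamental form, which one would typically write as $h^{\ell}(X,Y)=\breve g(\breve\nabla_X Y,\xi_i)N^i$-type data, can be computed purely from the ambient connection $\breve\nabla$, the induced degenerate metric $g$, and the radical distribution $\operatorname{Rad}TN$, none of which depends on the auxiliary choices $S(TN)$, $S(TN^{\perp})$, or $ltr(TN)$. So first I would recall the Gauss formula $\breve\nabla_X Y=\nabla_X Y+h(X,Y)$ for $X,Y\in\Gamma(TN)$, where $h$ takes values in $tr(TN)=ltr(TN)\perp S(TN^{\perp})$, and decompose $h=h^{\ell}+h^{s}$ according to this direct sum. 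The lightlike second fundamental form is the bilinear form $D^{\ell}$ defined by $g\big(h^{\ell}(X,Y),\cdot\big)$ paired against $\operatorname{Rad}TN$; concretely one sets $B(X,Y)=\breve g\big(h(X,Y),\xi\big)$ for $\xi\in\Gamma(\operatorname{Rad}TN)$, since the $S(TN^{\perp})$-component of $h$ is $\breve g$-orthogonal to $\operatorname{Rad}TN$ and the $ltr(TN)$-component pairs with $\operatorname{Rad}TN$ nondegenerately.

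The key computation is then the identity
\begin{equation*}
B(X,Y)=\breve g\big(h(X,Y),\xi\big)=\breve g\big(\breve\nabla_X Y,\xi\big)
=-\,\breve g\big(Y,\breve\nabla_X\xi\big)+X\big(\breve g(Y,\xi)\big),
\end{equation*}
where the last equality uses that $\breve\nabla$ is the Levi-Civita connection of $\breve g$, hence metric. Now $\breve g(Y,\xi)=g(Y,\xi)=0$ because $\xi\in\operatorname{Rad}TN=TN\cap TN^{\perp}$, so $B(X,Y)=-\breve g(Y,\breve\nabla_X\xi)=\breve g(\breve\nabla_X Y,\xi)$ outright. The right-hand side involves only $\breve\nabla$, the vector fields $X,Y\in\Gamma(TN)$, and an arbitrary section $\xi$ of $\operatorname{Rad}TN$ — and crucially no reference to $S(TN)$, $S(TN^{\perp})$, or $ltr(TN)$. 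Since $B$ (equivalently $D^{\ell}$, after identifying $\operatorname{Rad}TN$ with the dual of $ltr(TN)$ in the canonical way, an identification that is itself forced by $\breve g$) determines $h^{\ell}$, the lightlike second fundamental form is independent of these choices. The same argument applied to $\breve\nabla_X N$ for $N\in\Gamma(ltr(TN))$ handles the lightlike second fundamental form of the lightlike transversal bundle side if that is also intended.

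The main obstacle — really the only subtle point — is bookkeeping rather than substance: one must be careful that the \emph{value} $h^{\ell}(X,Y)\in\Gamma(ltr(TN))$ does depend on $ltr(TN)$ (the bundle itself changes!), so the correct invariant object is the scalar-valued (or $\operatorname{Rad}TN^{*}$-valued) form $B$, not the vector-valued $h^{\ell}$; the statement should be read in that light, and I would state this explicitly to avoid confusion. A secondary check is that $B$ is well defined independently of which local section $\xi$ of $\operatorname{Rad}TN$ is used and transforms correctly under change of the frame $\{\xi_i\}$, but this is immediate from linearity of $\breve g(\breve\nabla_X Y,\cdot)$ in the last slot. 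No deeper machinery is needed; the whole proof rests on metric-compatibility of $\breve\nabla$ together with $\operatorname{Rad}TN\subseteq TN^{\perp}$.
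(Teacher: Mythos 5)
Your argument is correct and is essentially the standard one from Duggal--Bejancu: the local forms $B_i(X,Y)=\breve g(\breve\nabla_X Y,\xi_i)$ depend only on $\breve\nabla$ and $\operatorname{Rad}T\acute{N}$, since the tangential part of $\breve\nabla_X Y$ and the $S(T\acute{N}^{\perp})$-part of $h$ both pair to zero with $\xi_i$; your remark that the invariant object is the scalar-valued form rather than the $ltr(T\acute{N})$-valued $h^{\ell}$ is exactly the right reading. Note that the paper itself gives no proof here --- it states the proposition with a citation to \cite{DB} --- so there is nothing to diverge from; the only cosmetic caveat is that your symbol $D^{\ell}$ clashes with the paper's use of $D^{\ell}$ in equation (\ref{11}).
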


We say that a submanifold $(\acute{N},g,S(T\acute{N}),$ $S(T\acute{N}^{\perp
}))$ of $\breve{N}$ is

Case 1: \ r-lightlike if $r<\min \{m,k\};$

Case 2: \ Co-isotropic if $r=k<m;S(T\acute{N}^{\perp })=\{0\};$

Case 3: \ Isotropic if $r=m=k;$ $S(T\acute{N})=\{0\};$

Case 4: \ Totally lightlike if $r=k=m;$ $S(T\acute{N})=\{0\}=S(T\acute{N}%
^{\perp }).$

\bigskip

The Gauss and Weingarten equations are%
\begin{eqnarray}
\breve{\nabla}_{W}U &=&\nabla _{W}U+h\left( W,U\right) ,\quad \forall W,U\in
\Gamma (T\acute{N}),  \label{7} \\
\breve{\nabla}_{W}V &=&-A_{V}W+\nabla _{W}^{t}V,\quad \forall W\in \Gamma (T%
\acute{N}),V\in \Gamma (tr(T\acute{N})),  \label{8}
\end{eqnarray}%
where $\left\{ \nabla _{W}U,A_{V}W\right\} $ and $\left\{ h\left( W,U\right)
,\nabla _{W}^{t}V\right\} $ belong to $\Gamma (T\acute{N})$ and $\Gamma (tr(T%
\acute{N})),$ respectively. Here, $\nabla $ and $\nabla ^{t}$ denote linear
connections on $\acute{N}$ and the vector bundle $tr\,(T\acute{N})$,
respectively. Moreover, we have%
\begin{eqnarray}
\breve{\nabla}_{W}U &=&\nabla _{W}U+h^{\ell }\left( {\small W,U}\right)
+h^{s}\left( {\small W,U}\right) ,\quad \forall W,U\in \text{$\Gamma (T%
\acute{N})$},  \label{9} \\
\breve{\nabla}_{W}N &=&-A_{N}W+\nabla _{W}^{\ell }N+D^{s}\left( {\small W,N}%
\right) ,\quad N\in \Gamma (\text{$ltr\,T\acute{N})$},  \label{10} \\
\breve{\nabla}_{W}Z &=&-A_{Z}W+\nabla _{W}^{s}Z+D^{\ell }\left( {\small W,Z}%
\right) ,\quad Z\in \Gamma (\text{$S(T\acute{N}^{\bot }))$}.  \label{11}
\end{eqnarray}%
Denote the projection of $T\acute{N}$ on $S(T\acute{N})$ by $P.$ Then by
using (\ref{7}), (\ref{9})-(\ref{11}) and the fact that $\breve{\nabla}$
being a metric connection , we obtain%
\begin{eqnarray}
\breve{g}(h^{s}\left( {\small W,U}\right) ,Z)+\breve{g}({\small U,D}^{\ell
}\left( {\small W,Z}\right) ) &=&\breve{g}\left( {\small A}_{Z}{\small W,U}%
\right) ,  \label{12} \\
\breve{g}\left( {\small D}^{s}\left( {\small W,N}\right) {\small ,Z}\right)
&=&\breve{g}\left( {\small N,A}_{Z}{\small W}\right) .  \label{13}
\end{eqnarray}%
From the decomposition of the tangent bundle of a lightlike submanifold, we
have%
\begin{eqnarray}
\nabla _{W}PU &=&\nabla _{W}^{\ast }PU+h^{\ast }\left( W,PU\right) ,
\label{14} \\
\nabla _{W}\xi &=&-A_{\xi }^{\ast }W+\nabla _{W}^{\ast t}\xi ,  \label{15}
\end{eqnarray}%
for $W,U\in \Gamma (T\acute{N})$ and $\xi \in \Gamma (Rad\,T\acute{N}).$ By
using above equations, we obtain%
\begin{eqnarray}
g\left( h^{\ell }\left( W,PU\right) ,\xi \right) &=&g\left( A_{\xi }^{\ast
}W,PU\right) ,  \label{16} \\
g\left( h^{s}\left( W,PU\right) ,N\right) &=&g\left( A_{N}W,PU\right) ,
\label{17} \\
g\left( h^{\ell }\left( W,\xi \right) ,\xi \right) &=&0,\quad A_{\xi }^{\ast
}\xi =0.  \label{18}
\end{eqnarray}%
In general, the induced connection $\nabla $ on $\acute{N}$ is not a metric
connection. Since $\breve{\nabla}$ is a metric connection, by using (\ref{9}%
) we get%
\begin{equation}
\left( \nabla _{W}g\right) \left( U,V\right) {\small =\bar{g}}\left( h^{\ell
}\left( W,U\right) ,V\right) {\small +\bar{g}}\left( h^{\ell }\left(
W,V\right) ,U\right) .  \label{19}
\end{equation}%
However, we note that $\nabla ^{\ast }$ is a metric connection on $S(T\acute{%
N})$.

\bigskip Fix two positive integers $p$ and $q$. The positive solution of the
equation 
\begin{equation*}
x^{2}-px-q=0,
\end{equation*}%
is entitled member of metallic means family.(\cite{V.W1}-\cite{V.W5}). These
numbers, denoted by%
\begin{equation}
\sigma _{p,q}=\frac{p+\sqrt{p^{2}+4q}}{2},  \label{19a}
\end{equation}%
are called $(p,q)$-metallic numbers.

\begin{definition}
A polynomial structure on a manifold $\breve{N}$ is called a Metallic
structure if it is determined by an $(1,1)$- tensor field $\breve{J}$, which
satisfies \ 
\begin{equation}
\breve{J}^{2}=p\breve{J}+qI,  \label{20}
\end{equation}%
where $I$ is the identity map on $\breve{N}$ and $p,q$ are positive
integers. Also,if 
\begin{equation}
\breve{g}(\breve{J}W,U)=\breve{g}(W,\breve{J}U)  \label{21}
\end{equation}%
\newline
holds Then the semi-Riemannian metric $\breve{g}$ is called $\breve{J}$%
-compatible, for every $U,W\in \Gamma (T\breve{N}).$ In this case $(\breve{N}%
,\breve{g},\breve{J})$ is named a Metallic semi-Riemannian manifold. Also a
Metallic semi-Riemannian structure $\breve{J}$ is called a locally Metallic
structure if $\breve{J}$ is parallel with respect to the Levi-Civita
connection $\breve{\nabla},$ that is 
\begin{equation}
\breve{\nabla}_{W}\breve{J}U=\breve{J}\breve{\nabla}_{W}U  \label{21a}
\end{equation}%
\cite{GH}.
\end{definition}

If $\breve{J}$ be a metallic structure, then (\ref{21}) is equivalent to 
\begin{equation}
\breve{g}(\breve{J}W,\breve{J}U)=p\breve{g}(\breve{J}W,U)+q\breve{g}(W,U),
\label{23}
\end{equation}%
for any $W,U\in \Gamma (T\breve{N}).$

\section{Radical Transversal Lightlike Submanifolds of Metallic
semi-Riemannian Manifolds}

In this section, we introduce radical transversal lightlike submanifolds of
a Metallic semi-Riemannian manifold.

\begin{definition}
Let $(\acute{N},g,S(T\acute{N}),S(T\acute{N}^{\perp }))$ be a lightlike
submanifold of a Metallic semi-Riemannian manifold $(\breve{N},\breve{g},%
\breve{J})$. If the following conditions are satisfied, then the lightlike
submanifold $\acute{N}$ is called a radical transversal lightlike
submanifold:%
\begin{eqnarray}
\breve{J}Rad\,T\acute{N} &=&ltr\,T\acute{N},  \label{24a} \\
\breve{J}S(T\acute{N}) &=&S(T\acute{N}).  \label{25a}
\end{eqnarray}
\end{definition}

\begin{proposition}
Let $\breve{N}$ be a Metallic semi-Riemannian manifold. In this case, there
is not 1-radical transversal lightlike submanifold of $\breve{N}.$
\end{proposition}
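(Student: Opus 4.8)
The plan is to argue by contradiction. Suppose $\acute{N}$ were a $1$-radical transversal lightlike submanifold of $\breve{N}$. Then $r=1$, so $Rad\,T\acute{N}$ is locally spanned by a single lightlike section $\xi$, and, by the construction recalled in Section~2, the lightlike transversal bundle $ltr\,T\acute{N}$ is locally spanned by a single section $N$ normalized so that $\breve{g}(\xi,N)=1$ and $\breve{g}(\xi,\xi)=\breve{g}(N,N)=0$.

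First I would observe that the metallic structure $\breve{J}$ is invertible: from (\ref{20}) one has $\breve{J}\big(\breve{J}-pI\big)=qI$, and since $q$ is a positive integer, $\breve{J}^{-1}=\tfrac{1}{q}\big(\breve{J}-pI\big)$; in particular $\breve{J}\xi\neq0$ everywhere. Next, the defining relation (\ref{24a}) forces $\breve{J}\xi\in\Gamma(ltr\,T\acute{N})$, hence $\breve{J}\xi=\alpha N$ for some smooth function $\alpha$, which by the previous remark is nowhere vanishing.

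Now I would substitute $W=U=\xi$ into the compatibility identity (\ref{23}). The left-hand side is $\breve{g}(\breve{J}\xi,\breve{J}\xi)=\alpha^{2}\breve{g}(N,N)=0$, while the right-hand side is $p\,\breve{g}(\breve{J}\xi,\xi)+q\,\breve{g}(\xi,\xi)=p\,\alpha\,\breve{g}(N,\xi)=p\,\alpha$. Hence $p\,\alpha=0$; since $p\geq1$ this gives $\alpha\equiv0$, contradicting the fact that $\alpha$ is nowhere zero. Therefore no $1$-radical transversal lightlike submanifold of $\breve{N}$ can exist.

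The computation itself is short, so the only points requiring care are the normalization of the null pair $(\xi,N)$ and the invertibility of $\breve{J}$ — together these guarantee $\alpha\neq0$, which is exactly what turns the identity $p\alpha=0$ into a genuine contradiction. As an alternative to invoking (\ref{23}), one could expand $\breve{J}^{2}\xi$ directly via (\ref{20}) together with $\breve{J}(ltr\,T\acute{N})=\breve{J}^{2}(Rad\,T\acute{N})=p\,ltr\,T\acute{N}\oplus q\,Rad\,T\acute{N}$ and pair the resulting identity against $N$; that route needs a little more bookkeeping of $\breve{J}N$ but yields the same conclusion.
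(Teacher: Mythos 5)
Your proof is correct and follows essentially the same route as the paper: both substitute $W=U=\xi$ into (\ref{23}) and play the resulting vanishing of $\breve{g}(\breve{J}\xi,\xi)$ against the fact that $\breve{J}\xi$ is a nonzero section of $ltr\,T\acute{N}$, which pairs nontrivially with $\xi$. Your explicit treatment of the scalar $\alpha$ and the invertibility of $\breve{J}$ is a welcome precision that the paper's version leaves implicit.
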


\begin{proof}
Let $\acute{N}$ be a 1-radical transversal lightlike submanifold. Hence, $%
Rad\,T\acute{N}=\{\xi \}$ and $ltr\,T\acute{N}=\{N\}.$ From the equation (%
\ref{23}), we have%
\begin{equation}
\breve{g}(\breve{J}\xi ,\xi )=\breve{g}(\xi ,\breve{J}\xi )=0.  \label{26b}
\end{equation}%
On the other hand, from (\ref{24a}), we have 
\begin{equation*}
\breve{g}(\breve{J}\xi ,\xi )=\breve{g}(\xi ,\breve{J}\xi )=\breve{g}(N,\xi
)=\breve{g}(\xi ,N)=1,
\end{equation*}%
which contradicts equation (\ref{26b}). The proof is completed.
\end{proof}

\begin{theorem}
Let $\acute{N}$ be a radical transversal lightlike submanifold of a Metallic
semi-Riemannian manifold $\breve{N}.$ In this case , the distribution $S(T%
\acute{N}^{\perp })$ is invariant with respect to $\breve{J}.$
\end{theorem}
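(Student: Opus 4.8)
The plan is to show that for every section $Z$ of $S(T\acute{N}^{\perp })$ the vector field $\breve{J}Z$ is $\breve{g}$-orthogonal to each of the bundles $S(T\acute{N})$, $Rad\,T\acute{N}$ and $ltr\,T\acute{N}$; since the decomposition $T\breve{N}\,|_{\acute{N}}=S(T\acute{N})\bot \lbrack Rad\,T\acute{N}\oplus ltr\,T\acute{N}]\perp S(T\acute{N}^{\bot })$ is an orthogonal sum whose first two summands are non-degenerate, this will force $\breve{J}Z\in \Gamma (S(T\acute{N}^{\perp }))$, which is the assertion.

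First I would record how $\breve{J}$ acts on the other bundles. Condition (\ref{25a}) gives $\breve{J}S(T\acute{N})=S(T\acute{N})$ outright. Applying $\breve{J}$ to (\ref{24a}) and using the metallic identity (\ref{20}), one gets $\breve{J}(ltr\,T\acute{N})=\breve{J}^{2}Rad\,T\acute{N}=p\,\breve{J}Rad\,T\acute{N}+q\,Rad\,T\acute{N}=p\,ltr\,T\acute{N}+q\,Rad\,T\acute{N}$, so $\breve{J}$ carries $Rad\,T\acute{N}\oplus ltr\,T\acute{N}$ into itself. Next, take $Z\in \Gamma (S(T\acute{N}^{\perp }))$. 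For an arbitrary $X\in \Gamma (S(T\acute{N}))$, the $\breve{J}$-compatibility (\ref{21}) gives $\breve{g}(\breve{J}Z,X)=\breve{g}(Z,\breve{J}X)$, and $\breve{J}X\in \Gamma (S(T\acute{N}))$ is orthogonal to $Z$, hence $\breve{g}(\breve{J}Z,X)=0$. For $\xi \in \Gamma (Rad\,T\acute{N})$ and $N\in \Gamma (ltr\,T\acute{N})$ one similarly writes $\breve{g}(\breve{J}Z,\xi )=\breve{g}(Z,\breve{J}\xi )$ and $\breve{g}(\breve{J}Z,N)=\breve{g}(Z,\breve{J}N)$; by the previous step $\breve{J}\xi $ and $\breve{J}N$ both lie in $\Gamma (Rad\,T\acute{N}\oplus ltr\,T\acute{N})$, which is orthogonal to $S(T\acute{N}^{\perp })$, so both right-hand sides vanish. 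Thus $\breve{J}Z$ is orthogonal to $S(T\acute{N})$ and to $Rad\,T\acute{N}\oplus ltr\,T\acute{N}$; decomposing $\breve{J}Z$ along the three summands above and using the non-degeneracy of the first two shows the first two components are zero, i.e. $\breve{J}Z\in \Gamma (S(T\acute{N}^{\perp }))$.

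I do not expect a genuine obstacle here; the one point deserving care is the justification that $S(T\acute{N}^{\perp })$ is exactly the orthogonal complement in $T\breve{N}\,|_{\acute{N}}$ of $S(T\acute{N})\oplus Rad\,T\acute{N}\oplus ltr\,T\acute{N}$, which is precisely what the non-degeneracy of $S(T\acute{N})$ and of $Rad\,T\acute{N}\oplus ltr\,T\acute{N}$ in the stated decomposition provides. If one wants the sharper conclusion $\breve{J}S(T\acute{N}^{\perp })=S(T\acute{N}^{\perp })$ rather than mere inclusion, I would add that $q>0$ makes $\breve{J}$ invertible, with $\breve{J}^{-1}=\frac{1}{q}(\breve{J}-pI)$, so that fibrewise injectivity of $\breve{J}|_{S(T\acute{N}^{\perp })}$ upgrades the inclusion to an equality.
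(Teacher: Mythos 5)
Your proposal is correct and follows essentially the same route as the paper: both arguments push $\breve{J}$ onto the other factor via (\ref{21}), use $\breve{J}S(T\acute{N})=S(T\acute{N})$ and $\breve{J}\xi\in ltr\,T\acute{N}$, and derive $\breve{J}N=pN+q\xi\in \Gamma(Rad\,T\acute{N}\oplus ltr\,T\acute{N})$ from the metallic identity to kill the pairing with $ltr\,T\acute{N}$, concluding from the orthogonal decomposition of $T\breve{N}\,|_{\acute{N}}$. Your version is in fact slightly cleaner, since it avoids the paper's intermediate $\frac{1}{p}\breve{g}(\breve{J}V,\breve{J}N)$ step and makes the non-degeneracy justification explicit.
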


\begin{proof}
For $V\in \Gamma (S(T\acute{N}^{\perp }))$ and $\xi \in \Gamma (Rad\,T\acute{%
N}),$ from (\ref{23}), we find 
\begin{equation*}
\breve{g}(\breve{J}V,\xi )=\breve{g}(V,\breve{J}\xi )=0,
\end{equation*}%
which implies that there is not a component in $ltr\,T\acute{N}$ of $\breve{J%
}V.$

Similarly, for $N\in \Gamma l(tr\,T\acute{N}),$\ from (\ref{23}), we have%
\begin{equation}
\breve{g}(\breve{J}V,N)=\breve{g}(V,\breve{J}N)=\frac{1}{p}\breve{g}(\breve{J%
}V,\breve{J}N).  \label{20a}
\end{equation}%
From definition of a radical transversal lightlike submanifold, for $\xi
_{1}\in \Gamma (Rad\,T\acute{N})$ and $N_{1}\in \Gamma (ltr\,T\acute{N}),$
we get%
\begin{equation*}
\breve{J}\xi _{1}=N_{1}.
\end{equation*}%
If we apply $\breve{J}$ to the last equation, we can write%
\begin{equation*}
p\breve{J}\xi _{1}+q\xi _{1}=\breve{J}N_{1},
\end{equation*}
which implies equation (\ref{20a}) equals zero. Namely, we see that, there
is no component of $\breve{J}V$ in $Rad\,T\acute{N}$ $.$

By a similar way, for $W\in \Gamma (S(T\acute{N})),$ we obtain 
\begin{equation*}
\breve{g}(\breve{J}V,W)=\breve{g}(V,\breve{J}W)=0,
\end{equation*}%
that is, there is no component of $\breve{J}V$\ in $S(T\acute{N}).$ Hence,
the proof is completed.
\end{proof}

\bigskip

\bigskip Let $\acute{N}$ be a radical transversal lightlike submanifold of
Metallic semi-Riemannian manifold $\breve{N}.$ $Q$ and $T$ denote projection
morphisms in $Rad\,T\acute{N}$ and $S(T\acute{N}),$ respectively. For any $%
W\in \Gamma (T\acute{N}),$ we can write%
\begin{equation}
W=TW+QW,  \label{28a}
\end{equation}%
where $TW\in \Gamma (S(T\acute{N}))$ and $QW\in \Gamma (Rad\,T\acute{N}).$
By applying $\breve{J}$ to (\ref{28a}), we have%
\begin{equation}
\breve{J}W=\breve{J}TW+\breve{J}QW.  \label{29b}
\end{equation}%
Here, if we write $\breve{J}TW=SW$ and $\breve{J}QW=LW,$ then (\ref{29b})
becomes 
\begin{equation}
\breve{J}W=SW+LW,  \label{30a}
\end{equation}%
where, $SW\in \Gamma (S(T\acute{N}))$ and $LW\in ltr\,T\acute{N}.$

Assume, $\acute{N}$ be a radical transversal submanifold of a locally
Metallic semi-Riemannian manifold $\breve{N}.$ From (\ref{21a}), (\ref{9})
and (\ref{11}), we have%
\begin{equation*}
\breve{\nabla}_{U}\left( SW+LW\right) {\small =}\breve{J}\left( \nabla
_{U}W+h^{l}(U,W\right) {\small +}h^{s}(U,W)),
\end{equation*}%
where $U,W\in \Gamma (T\acute{N}).$\ If we write $\breve{J}h^{l}(U,W)=K_{1}%
\breve{J}h^{l}(U,W)+K_{2}\breve{J}h^{l}(U,W),$ where $K_{1}$ and $K_{2}$ are
projection morphisms of $\breve{J}ltr\,T\acute{N}$ in $ltr\,T\acute{N}$ and $%
Rad\,T\acute{N}$, respectively$,$ we find 
\begin{equation*}
\left( 
\begin{array}{c}
\nabla _{U}SW+h\text{$^{l}$}(U,SW)+h^{s}(U,SW) \\ 
-A_{LW}U+\nabla _{U}^{l}LW+D^{s}(U,LW)%
\end{array}%
\right) =\left( 
\begin{array}{c}
S\nabla _{U}W+L\nabla _{U}W+\breve{J}h^{s}(U,W) \\ 
+K_{1}\breve{J}h^{l}(U,W)+K_{2}\breve{J}h^{l}(U,W)%
\end{array}%
\right) .
\end{equation*}%
Thus, by equating the tangent, screen transversal and lightlike transversal
parts components, we have%
\begin{eqnarray*}
\nabla _{U}SW-A_{LW}U &=&S\nabla _{U}W+K_{2}\breve{J}h^{l}(U,W), \\
h^{S}(U,SW)+D^{s}(U,LW) &=&\breve{J}h^{s}(U,W), \\
h^{l}(U,SW)+\nabla _{U}^{l}LW &=&L\nabla _{U}W+K_{1}\breve{J}h^{l}(U,W).
\end{eqnarray*}%
Therefore we give the following proposition.

\begin{proposition}
\bigskip Let $\acute{N}$ be a radical transversal lightlike submanifold of a
locally Metallic semi-Riemannian manifold $\breve{N}$. Then, we have 
\begin{eqnarray}
\left( \nabla _{U}S\right) W& =&A_{LW}U+K_{2}\breve{J}h^{l}(U,W),
\label{31a} \\
0 &=&h^{S}(U,SW)+D^{s}(U,LW)-\breve{J}h^{s}(U,W),  \label{32a} \\
0 &=&h^{l}(U,SW)+\nabla _{U}^{l}LW-L\nabla _{U}W-K_{1}\breve{J}h^{l}(U,W),
\label{33a}
\end{eqnarray}%
for $W,U\in \Gamma (T\acute{N}).$
\end{proposition}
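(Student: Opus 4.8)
The plan is to read off the three displayed identities from the single relation $\breve{\nabla}_{U}\breve{J}W=\breve{J}\breve{\nabla}_{U}W$ of (\ref{21a}), which is available because $\breve{N}$ is locally Metallic. The strategy is the standard one for identities of this type: expand each side of that relation against the decomposition $T\breve{N}|_{\acute{N}}=S(T\acute{N})\perp[Rad\,T\acute{N}\oplus ltr\,T\acute{N}]\perp S(T\acute{N}^{\perp})$, and then match the pieces living in $\Gamma(T\acute{N})$, in $\Gamma(S(T\acute{N}^{\perp}))$ and in $\Gamma(ltr\,T\acute{N})$ separately.

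First I would write $\breve{J}W=SW+LW$ as in (\ref{30a}) and expand the left-hand side. Since $SW\in\Gamma(S(T\acute{N}))\subseteq\Gamma(T\acute{N})$, the Gauss formula (\ref{9}) applies to $\breve{\nabla}_{U}SW$; since $LW\in\Gamma(ltr\,T\acute{N})$, the Weingarten-type formula (\ref{10}) applies to $\breve{\nabla}_{U}LW$. This gives
\[
\breve{\nabla}_{U}\breve{J}W=\nabla_{U}SW+h^{\ell}(U,SW)+h^{s}(U,SW)-A_{LW}U+\nabla_{U}^{\ell}LW+D^{s}(U,LW).
\]
For the right-hand side I would apply $\breve{J}$ to (\ref{9}) and resolve each summand: $\breve{J}\nabla_{U}W=S\nabla_{U}W+L\nabla_{U}W$ by the definition of $S$ and $L$; $\breve{J}h^{s}(U,W)\in\Gamma(S(T\acute{N}^{\perp}))$ because $S(T\acute{N}^{\perp})$ is $\breve{J}$-invariant by Theorem~3.1; and for the lightlike normal term, writing $h^{\ell}(U,W)=\breve{J}\xi$ with $\xi\in\Gamma(Rad\,T\acute{N})$ by (\ref{24a}), relation (\ref{20}) gives $\breve{J}h^{\ell}(U,W)=\breve{J}^{2}\xi=p\breve{J}\xi+q\xi\in\Gamma(ltr\,T\acute{N}\oplus Rad\,T\acute{N})$, so it splits as $K_{1}\breve{J}h^{\ell}(U,W)+K_{2}\breve{J}h^{\ell}(U,W)$ with $K_{1},K_{2}$ the projections onto $ltr\,T\acute{N}$ and $Rad\,T\acute{N}$.

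Then I would equate the two expressions componentwise. The $S(T\acute{N}^{\perp})$-components give $h^{s}(U,SW)+D^{s}(U,LW)=\breve{J}h^{s}(U,W)$, which is (\ref{32a}); the $ltr\,T\acute{N}$-components give $h^{\ell}(U,SW)+\nabla_{U}^{\ell}LW=L\nabla_{U}W+K_{1}\breve{J}h^{\ell}(U,W)$, which is (\ref{33a}). The $T\acute{N}$-components give $\nabla_{U}SW-A_{LW}U=S\nabla_{U}W+K_{2}\breve{J}h^{\ell}(U,W)$, and rewriting $\nabla_{U}SW-S\nabla_{U}W=(\nabla_{U}S)W$ by the definition of the covariant derivative of the tensor field $S$ produces (\ref{31a}).

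I do not expect a genuine obstacle: once the Gauss--Weingarten machinery of Section~2 is in place, this is essentially a bookkeeping argument. The one point that needs care is the bundle membership of the terms produced by applying $\breve{J}$ --- in particular, that $\breve{J}h^{\ell}(U,W)$ has no component in $S(T\acute{N})$ or $S(T\acute{N}^{\perp})$ and that $\breve{J}h^{s}(U,W)$ stays inside $S(T\acute{N}^{\perp})$; this is exactly where the radical transversal conditions (\ref{24a})--(\ref{25a}) and the $\breve{J}$-invariance of $S(T\acute{N}^{\perp})$ (Theorem~3.1) are used. One should also note that $A_{LW}U\in\Gamma(T\acute{N})$, so it contributes only to the tangential block, which is immediate from (\ref{10}).
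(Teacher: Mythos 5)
Your proposal is correct and follows essentially the same route as the paper: expand $\breve{\nabla}_{U}(SW+LW)=\breve{J}(\nabla_{U}W+h^{\ell}(U,W)+h^{s}(U,W))$ using (\ref{9}), (\ref{10}), split $\breve{J}h^{\ell}(U,W)$ by the projections $K_{1},K_{2}$, and equate the tangential, screen transversal and lightlike transversal components. Your explicit justification of the bundle memberships (via the $\breve{J}$-invariance of $S(T\acute{N}^{\perp})$ and $\breve{J}^{2}=p\breve{J}+qI$ on $Rad\,T\acute{N}$) is a point the paper leaves implicit, but it is the same argument.
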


\begin{theorem}
Let $\acute{N}$ be a radical transversal lightlike submanifold of a locally
Metallic semi-Riemannian manifold $\breve{N}$. Then, the induced connection $%
\nabla $ on $\acute{N}$ is a metric connection if and only if there is no
component of $A_{\breve{J}\xi }W$ in $\Gamma (S(T\acute{N})),$ for $W\in
\Gamma (T\acute{N})$ and $\xi \in \Gamma (Rad\,T\acute{N}).$
\end{theorem}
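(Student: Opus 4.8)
The plan is to characterize when the induced connection $\nabla$ is metric by working directly from the obstruction formula \eqref{19}, namely that $\nabla$ is metric precisely when $h^{\ell}(W,U)=0$ for all $W,U\in\Gamma(T\acute{N})$, or equivalently (by Proposition 2.1 together with \eqref{16} and \eqref{18}) when the operator $A^{\ast}_{\xi}$ vanishes on $T\acute{N}$ for every $\xi\in\Gamma(Rad\,T\acute{N})$. So the whole proof reduces to translating the condition ``$A^{\ast}_{\xi}W$ has no component in $S(T\acute{N})$'' for all $W$ into the condition stated in the theorem, ``$A_{\breve{J}\xi}W$ has no component in $S(T\acute{N})$''. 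The bridge between the two is the metallic structure: since $\acute{N}$ is radical transversal, $\breve{J}\xi\in\Gamma(ltr\,T\acute{N})$, so the screen-valued shape operator $A_{\breve{J}\xi}$ associated with the lightlike transversal vector $\breve{J}\xi$ is a meaningful object via \eqref{10}, and I expect it to be conjugate to $A^{\ast}_{\xi}$ up to the metallic relation.

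First I would fix $\xi\in\Gamma(Rad\,T\acute{N})$ and $W,U\in\Gamma(T\acute{N})$ and compute $\breve{g}(h^{\ell}(W,U),\xi)$ in two ways. On one hand it equals $g(A^{\ast}_{\xi}W,PU)$ by \eqref{16}. On the other hand, write $N=\breve{J}\xi\in\Gamma(ltr\,T\acute{N})$; I would differentiate $\breve{g}(\breve{J}W,\xi)$ or, more efficiently, use that $\breve{\nabla}$ is a metric connection and $\breve{J}$ is $\breve{g}$-compatible to relate $\breve{g}(\breve{\nabla}_W U,\breve{J}\xi)$ to $\breve{g}(\breve{\nabla}_W U,\xi)$-type terms. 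Concretely, $\breve{g}(h(W,U),\breve{J}\xi)=\breve{g}(\breve{\nabla}_W U,\breve{J}\xi)=\breve{g}(\breve{J}\breve{\nabla}_W U,\xi)=\breve{g}(\breve{\nabla}_W(\breve{J}U),\xi)$ using \eqref{21} and local metallicity \eqref{21a}; expanding $\breve{J}U=SU+LU$ via \eqref{30a} and using \eqref{9}, \eqref{10} this yields an expression involving $h^{\ell}(W,SU)$ paired with $\xi$ and the shape operator $A_{LU}$. The point of this computation is to exhibit $A_{\breve{J}\xi}$ acting on the screen part and to show its screen component is controlled by the same data as $h^{\ell}$.

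The cleaner route — and the one I would actually write up — is to use \eqref{17}: $g(h^s(W,PU),N)=g(A_N W,PU)$ is the wrong pairing, so instead I use the defining relation for $A_N$ from \eqref{10} together with $\breve{g}(N,\xi')$ being the Kronecker pairing on $Rad\oplus ltr$. Pair $h^{\ell}(W,U)$, which lives in $ltr\,T\acute{N}=\breve{J}(Rad\,T\acute{N})$, against a radical field: since $h^{\ell}(W,U)=\sum_i \eta_i(W,U)\,\breve{J}\xi_i$ for some 1-forms $\eta_i$, and since $\breve{g}(\breve{J}\xi_i,\xi_j)=\breve{g}(N_i,\xi_j)$ can be computed using the metallic compatibility and the fact established in the proof of Theorem 3.1 that $p\breve{J}\xi_i+q\xi_i=\breve{J}N_i$, I can solve for the coefficients $\eta_i$ in terms of $\breve{g}(h(W,U),\xi_i)=\breve{g}(\breve{\nabla}_W U,\xi_i)$. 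Then I rewrite $\breve{g}(\breve{\nabla}_W U,\xi_i)=-\breve{g}(U,\breve{\nabla}_W\xi_i)$ and use \eqref{10} with $\breve{\nabla}_W(\breve{J}^{-1}N_i)$, or more directly expand $\breve{g}(U,\breve{\nabla}_W N_i)=-g(A_{N_i}W,U)+\breve{g}(U,\nabla^{\ell}_W N_i+D^s(W,N_i))$, the last two terms being orthogonal to $U\in\Gamma(T\acute{N})$ except for the $ltr$-component of $U$, which is zero. Combining, $h^{\ell}(W,U)=0$ for all $U$ iff $g(A_{\breve{J}\xi_i}W,U)$ vanishes for all $U\in\Gamma(S(T\acute{N}))$, i.e. iff $A_{\breve{J}\xi}W$ has no $S(T\acute{N})$-component; then \eqref{19} gives the metric-connection conclusion.

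The main obstacle I anticipate is bookkeeping the several non-orthogonal direct sums correctly: $A_{N}W$ a priori has components in all of $S(T\acute{N})\perp[Rad\oplus ltr]$, and the pairing $\breve{g}(\,\cdot\,,\xi)$ kills everything except the $ltr$-component of its argument, while $\breve{g}(\,\cdot\,,U)$ for $U\in S(T\acute{N})$ kills everything except the $S(T\acute{N})$-component — so I must be scrupulous about which projection of $A_{\breve{J}\xi}W$ survives each pairing, and use Proposition 2.1 to assert independence of the lightlike second fundamental form from the choice of screen so the statement is well-posed. A secondary subtlety is that $\breve{J}$ restricted to $Rad\,T\acute{N}$ need not be a bijection onto $ltr\,T\acute{N}$ unless one checks invertibility; but metallicity $\breve{J}^2=p\breve{J}+qI$ with $q>0$ makes $\breve{J}$ invertible with $\breve{J}^{-1}=\frac{1}{q}(\breve{J}-pI)$, which legitimizes writing $\xi=\breve{J}^{-1}N$ and keeps the argument symmetric. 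Once these identifications are pinned down, the equivalence is a one-line consequence of \eqref{19} and \eqref{16}.
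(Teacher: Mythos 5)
Your proposal is correct and follows essentially the same route as the paper: both reduce ``metric connection'' to the standard criterion that $Rad\,T\acute{N}$ be $\nabla$-parallel (equivalently $A^{\ast}_{\xi}=0$, equivalently $h^{\ell}=0$), and both then use local metallicity $\breve{\nabla}_{W}\breve{J}\xi=\breve{J}\breve{\nabla}_{W}\xi$ together with the Weingarten formula (\ref{10}) applied to $\breve{J}\xi\in\Gamma(ltr\,T\acute{N})$ to identify the $S(T\acute{N})$-component of $\nabla_{W}\xi$ with that of $A_{\breve{J}\xi}W$ up to the screen automorphism $\breve{J}-pI=q\breve{J}^{-1}$. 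Your write-up is actually more scrupulous than the paper's (which dismisses the converse as obvious and writes $\nabla_{\xi}W$ where $\nabla_{W}\xi$ is meant), but the underlying argument is the same.
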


\begin{proof}
Assume that the induced connection $\nabla $ is a metric connection. In this
case, for $W\in \Gamma (T\acute{N})$ and $\xi \in \Gamma (Rad\,T\acute{N}),$ 
$\nabla _{\xi }W\in \Gamma (Rad\,T\acute{N}).$ Here, $U\in \Gamma (S(T\acute{%
N})),$ we have 
\begin{equation*}
g\left( \nabla _{\xi }W,U\right) =\breve{g}(\breve{\nabla}_{\xi }W,U)=0.
\end{equation*}%
If we use (\ref{23}), we find%
\begin{equation*}
0=\breve{g}(\breve{J}\breve{\nabla}_{\xi }W,\breve{J}U)-p\breve{g}(\breve{%
\nabla}_{\xi }W,\breve{J}U),
\end{equation*}%
and from (\ref{10}), we have%
\begin{equation*}
\breve{g}(A_{\breve{J}\xi }W,\breve{J}U)=0,
\end{equation*}%
which implies, there is no component of $A_{\breve{J}\xi }W$ in $\Gamma (S(T%
\acute{N})).$

Since the converse is obvious, then we omit it.
\end{proof}

Now, we shall investigate the conditions for integrability of the
distributions involved in the definition of radical transversal lightlike
submanifolds.

\begin{theorem}
Let $\acute{N}$ be a radical transversal lightlike submanifold of a locally
Metallic semi-Riemannian manifold $\breve{N}$. In this case, the screen
distribution\ is integrable if and only if 
\begin{equation*}
h^{l}(U,SW)=h^{l}(W,SU),
\end{equation*}%
for $W,U\in \Gamma (S(T\acute{N})).$
\end{theorem}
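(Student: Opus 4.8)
The plan is to characterize integrability of the screen distribution $S(T\acute{N})$ by testing whether the Lie bracket $[U,W]$ of two screen vector fields stays inside $S(T\acute{N})$; since $T\acute{N} = Rad\,T\acute{N} \perp S(T\acute{N})$, this is equivalent to showing that $[U,W]$ has no component in $Rad\,T\acute{N}$, which in turn is detected by pairing with the lightlike transversal bundle $ltr\,T\acute{N}$. Concretely, $S(T\acute{N})$ is integrable if and only if $\breve{g}([U,W],N) = 0$ for all $W,U \in \Gamma(S(T\acute{N}))$ and $N \in \Gamma(ltr\,T\acute{N})$. So the first step is to reduce the problem to computing $\breve{g}(\breve{\nabla}_U W - \breve{\nabla}_W U, N)$.

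Next I would bring in the Metallic structure. For $W \in \Gamma(S(T\acute{N}))$ we have by the definition of a radical transversal lightlike submanifold that $\breve{J}W = SW \in \Gamma(S(T\acute{N}))$ (i.e. in the notation of the excerpt $LW = 0$ on the screen, so $\breve{J}$ preserves $S(T\acute{N})$), and $\breve{J}N \in \Gamma(Rad\,T\acute{N})$. The idea is to use the $\breve{J}$-compatibility relation (\ref{23}) together with the local metallic hypothesis (\ref{21a}) to rewrite $\breve{g}(\breve{\nabla}_U W, N)$ in terms of the second fundamental form $h^l$. Writing $\breve{g}(\breve{\nabla}_U W, N)$ and applying $\breve{J}$ via (\ref{23}) in the form $\breve{g}(\breve{\nabla}_U W, N) = \tfrac{1}{q}\big(\breve{g}(\breve{J}\breve{\nabla}_U W, \breve{J}N) - p\,\breve{g}(\breve{J}\breve{\nabla}_U W, N)\big)$, then using $\breve{J}\breve{\nabla}_U W = \breve{\nabla}_U \breve{J}W = \breve{\nabla}_U SW = \nabla_U SW + h^l(U,SW) + h^s(U,SW)$, and noting that $\breve{J}N \in \Gamma(Rad\,T\acute{N})$ so that only the $h^l$ term survives in the pairing with $\breve{J}N$, I expect to extract that $\breve{g}(\breve{\nabla}_U W, N)$ is, up to the invertible scalar factors coming from $p,q$ and the identification $\breve{J}\xi = N$, essentially $\breve{g}(h^l(U,SW), \xi)$-type quantity. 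Antisymmetrizing in $U,W$ then shows $\breve{g}([U,W],N)$ vanishes precisely when $h^l(U,SW) = h^l(W,SU)$.

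The main obstacle I anticipate is the bookkeeping of which terms land in which bundle after applying $\breve{J}$ and $\breve{\nabla}$: one must carefully use that $\breve{J}$ maps $S(T\acute{N})$ to itself and $Rad\,T\acute{N}$ to $ltr\,T\acute{N}$ (and, by Theorem, $S(T\acute{N}^\perp)$ to itself), so that $h^s(U,SW)$ and $\nabla_U SW$ pair to zero against $\breve{J}N$ and against $N$ in the relevant combinations, leaving only the $h^l$ contribution. Equivalently, one can shortcut this using the already-derived relation (\ref{33a}), namely $h^l(U,SW) + \nabla^l_U LW - L\nabla_U W - K_1 \breve{J}h^l(U,W) = 0$: for $U,W \in \Gamma(S(T\acute{N}))$ we have $LW = LU = 0$ and $h^l(U,W)$ need not vanish, but $L\nabla_U W = \breve{J}Q\nabla_U W$ captures exactly the $Rad\,T\acute{N}$-component of $\nabla_U W$ pushed into $ltr\,T\acute{N}$; subtracting the same identity with $U,W$ swapped and using the symmetry of $h^l$ and of $h^s$ kills the $K_1\breve{J}h^l(U,W)$ terms and the $h^s$ terms, giving $h^l(U,SW) - h^l(W,SU) = L(\nabla_U W - \nabla_W U) = L[U,W] = \breve{J}\,Q[U,W]$. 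Since $\breve{J}$ restricted to $Rad\,T\acute{N}$ is injective (its image is $ltr\,T\acute{N}$), $L[U,W] = 0$ if and only if $Q[U,W] = 0$, i.e. $[U,W] \in \Gamma(S(T\acute{N}))$. This yields the claimed equivalence, and I would present the argument in this second, cleaner form, invoking Proposition and equation (\ref{33a}).
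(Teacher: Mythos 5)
Your proposal is correct and, in the form you say you would present it, coincides with the paper's own proof: both start from equation (\ref{33a}), note that $LW=LU=0$ for screen sections, antisymmetrize in $U,W$, and use the symmetry of $h^{l}$ to obtain $h^{l}(U,SW)-h^{l}(W,SU)=L[U,W]$, which vanishes precisely when $[U,W]$ has no radical component. Your added remark that $\breve{J}$ is injective on $Rad\,T\acute{N}$ (so $L[U,W]=0$ iff $Q[U,W]=0$) is a detail the paper leaves implicit, and is a welcome clarification.
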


\begin{proof}
For $W,U\in \Gamma (S(T\acute{N})),$ if we use equation (\ref{33a}) and by
interchanging the roles of $U$ and $W,$ we find 
\begin{equation*}
h^{l}(U,SW)-h^{l}(W,SU)-K_{1}\left( \breve{J}h^{l}(U,W)-\breve{J}%
h^{l}(W,U)\right) =L\left[ U,W\right] .
\end{equation*}%
Since $h^{l}$ is symmetric, we obtain%
\begin{equation*}
h^{l}(U,SW)-h^{l}(W,SU)=L\left[ U,W\right] ,
\end{equation*}%
which completes the proof.
\end{proof}

\begin{theorem}
\bigskip Let $\acute{N}$ be a radical transversal lightlike submanifold of a
locally Metallic semi-Riemannian manifold $\breve{N}$. The radical
distribution is integrable if and only if 
\begin{equation*}
A_{LU}W=A_{LW}U,
\end{equation*}%
for $V,W\in \Gamma (Rad\,T\acute{N}).$
\end{theorem}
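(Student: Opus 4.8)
The plan is to read the equivalence directly off the structure equation (\ref{31a}) of Proposition 3.4, specialized to sections of the radical distribution. First I would fix $U,W\in\Gamma(Rad\,T\acute{N})$ and record the effect of the decomposition (\ref{30a}): since $TU=TW=0$ we get $SU=SW=0$ and $LU=\breve{J}U$, $LW=\breve{J}W$, both of which lie in $\Gamma(ltr\,T\acute{N})$, so that $A_{LU}W$ and $A_{LW}U$ are well-defined sections of $T\acute{N}$ by (\ref{10}).

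Next I would evaluate (\ref{31a}) on this pair. Because $SW=0$, one has $(\nabla_US)W=\nabla_U(SW)-S(\nabla_UW)=-S(\nabla_UW)$, so (\ref{31a}) reduces to
\[
-S(\nabla_UW)=A_{LW}U+K_{2}\breve{J}h^{l}(U,W).
\]
Writing the same identity with $U$ and $W$ interchanged and subtracting, the symmetry of $h^{l}$ (already used in Theorem 3.5) kills the $K_{2}\breve{J}h^{l}$ terms, while the fact that the induced connection $\nabla$ is torsion-free — which follows from the Gauss formula (\ref{9}) together with $\breve{\nabla}$ being a Levi-Civita connection — turns $\nabla_UW-\nabla_WU$ into $[U,W]$. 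This yields
\[
A_{LW}U-A_{LU}W=-S[U,W].
\]

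It then remains to interpret the vanishing of $S[U,W]$. By definition $S[U,W]=\breve{J}\,T[U,W]$, where $T$ is the projection of $T\acute{N}$ onto $S(T\acute{N})$; since the metallic structure is invertible, with $\breve{J}^{-1}=\frac{1}{q}(\breve{J}-pI)$ as $q\neq 0$, and carries $S(T\acute{N})$ onto itself by (\ref{25a}), we get $S[U,W]=0$ if and only if $T[U,W]=0$, i.e. if and only if $[U,W]\in\Gamma(Rad\,T\acute{N})$. Combining this with the displayed identity, $A_{LU}W=A_{LW}U$ holds for all $U,W\in\Gamma(Rad\,T\acute{N})$ precisely when $[U,W]\in\Gamma(Rad\,T\acute{N})$ for all such $U,W$, that is, precisely when $Rad\,T\acute{N}$ is integrable.

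I expect the only delicate point to be this last translation: one has to invoke that $\breve{J}$ restricts to a bundle isomorphism of $S(T\acute{N})$ in order to pass from $S[U,W]=0$ to $T[U,W]=0$ (equivalently, to see that $A_{LW}U-A_{LU}W$ has vanishing screen part only when $[U,W]$ itself has vanishing screen part); everything else is a direct manipulation of (\ref{31a}).
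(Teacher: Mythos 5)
Your proof is correct and follows essentially the same route as the paper: specialize (\ref{31a}) to radical sections where $SW=0$, antisymmetrize, and use the symmetry of $h^{l}$ together with torsion-freeness of $\nabla$ to obtain $S[U,W]=A_{LU}W-A_{LW}U$. The only difference is that you explicitly justify the final equivalence $S[U,W]=0\iff[U,W]\in\Gamma(Rad\,T\acute{N})$ via the invertibility of $\breve{J}$ on $S(T\acute{N})$, a step the paper leaves implicit; this is a worthwhile clarification but not a different argument.
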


\begin{proof}
For $V,W\in \Gamma (Rad\,T\acute{N}),$ if we use equation (\ref{31a}), we
have 
\begin{equation*}
-S\nabla _{U}W=A_{LW}U+K_{2}\breve{J}h^{l}(U,W),
\end{equation*}%
by virtue of $SW=0.$ By changing the roles of $U$ and $W,$ we find%
\begin{equation*}
S\left( \nabla _{W}U-\nabla _{U}W\right) =A_{LU}W-A_{LW}U+K_{2}\left( \breve{%
J}h^{l}(W,U)-\breve{J}h^{l}(U,W)\right) .
\end{equation*}%
Since $h^{l}$ is known to be symmetric, we obtain 
\begin{equation*}
S\left[ W,U\right] =A_{LU}W-A_{LW}U.
\end{equation*}%
Therefore, the proof is completed.
\end{proof}

\begin{theorem}
Let $\acute{N}$ be a radical transversal lightlike submanifold of a locally
Metallic semi-Riemannian manifold $\breve{N}$. Then the radical distribution
define as a totally geodesic foliation if and only if 
\begin{equation*}
h^{\ast }(W,\breve{J}Z)=ph^{\ast }(W,Z),
\end{equation*}%
for $W\in \Gamma (Rad\,T\acute{N}),$ $Z\in \Gamma (S(T\acute{N})).$
\end{theorem}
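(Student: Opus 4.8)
The plan is to turn the geometric statement into an equation for the screen component of $\nabla_{W}W'$. Recall that $Rad\,T\acute{N}$ defines a totally geodesic foliation means $\nabla_{W}W'\in\Gamma(Rad\,T\acute{N})$ for all $W,W'\in\Gamma(Rad\,T\acute{N})$ (integrability of $Rad\,T\acute{N}$ is then automatic, since $[W,W']=\nabla_{W}W'-\nabla_{W'}W$). As $\nabla_{W}W'\in\Gamma(T\acute{N})$, $T\acute{N}=S(T\acute{N})\perp Rad\,T\acute{N}$, $g$ is nondegenerate on $S(T\acute{N})$, and $Rad\,T\acute{N}$ is $\breve{g}$-orthogonal to $T\acute{N}$, this is equivalent to $g(\nabla_{W}W',Z)=0$ for all $Z\in\Gamma(S(T\acute{N}))$. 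Moreover, by the Gauss formula \eqref{9}, $h^{\ell}(W,W')\in\Gamma(ltr\,T\acute{N})$ and $h^{s}(W,W')\in\Gamma(S(T\acute{N}^{\bot}))$ are $\breve{g}$-orthogonal to $S(T\acute{N})$, so $g(\nabla_{W}W',Z)=\breve{g}(\breve{\nabla}_{W}W',Z)$, and everything reduces to the ambient expression $\breve{g}(\breve{\nabla}_{W}W',Z)$.

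To compute it I would first apply \eqref{23} to the pair $\breve{\nabla}_{W}W',\ Z$, obtaining
\[
q\,\breve{g}(\breve{\nabla}_{W}W',Z)=\breve{g}(\breve{J}\breve{\nabla}_{W}W',\breve{J}Z)-p\,\breve{g}(\breve{J}\breve{\nabla}_{W}W',Z).
\]
Since $\breve{N}$ is locally Metallic, \eqref{21a} gives $\breve{J}\breve{\nabla}_{W}W'=\breve{\nabla}_{W}\breve{J}W'$, and by \eqref{24a}, $\breve{J}W'\in\Gamma(ltr\,T\acute{N})$, so the Weingarten formula \eqref{10} yields $\breve{\nabla}_{W}\breve{J}W'=-A_{\breve{J}W'}W+\nabla_{W}^{\ell}\breve{J}W'+D^{s}(W,\breve{J}W')$. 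Because $\breve{J}Z\in\Gamma(S(T\acute{N}))$ by \eqref{25a}, pairing this with $Z$ and with $\breve{J}Z$ kills the term $\nabla_{W}^{\ell}\breve{J}W'\in\Gamma(ltr\,T\acute{N})$ and the term $D^{s}(W,\breve{J}W')\in\Gamma(S(T\acute{N}^{\bot}))$, which leaves $q\,g(\nabla_{W}W',Z)=-g(A_{\breve{J}W'}W,\breve{J}Z)+p\,g(A_{\breve{J}W'}W,Z)$. Using the screen shape-operator identity $g(A_{N}W,PU)=\breve{g}(h^{\ast}(W,PU),N)$ (cf. \eqref{17}), with $N=\breve{J}W'$ and $PU$ equal successively to $\breve{J}Z$ and to $Z$, this becomes
\[
q\,g(\nabla_{W}W',Z)=-\breve{g}\big(h^{\ast}(W,\breve{J}Z)-p\,h^{\ast}(W,Z),\,\breve{J}W'\big).
\]

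Finally, $h^{\ast}(W,\breve{J}Z)-p\,h^{\ast}(W,Z)\in\Gamma(Rad\,T\acute{N})$, $\breve{J}W'\in\Gamma(ltr\,T\acute{N})$ and, by \eqref{24a}, $\breve{J}W'$ exhausts $ltr\,T\acute{N}$ as $W'$ runs over $Rad\,T\acute{N}$; since the $\breve{g}$-pairing between $Rad\,T\acute{N}$ and $ltr\,T\acute{N}$ is nondegenerate, the last displayed right-hand side vanishes for all admissible $W,W',Z$ if and only if $h^{\ast}(W,\breve{J}Z)=p\,h^{\ast}(W,Z)$ for all $W\in\Gamma(Rad\,T\acute{N})$, $Z\in\Gamma(S(T\acute{N}))$. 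Reading the chain of equivalences in both directions gives the theorem. The step requiring the most care is the bookkeeping immediately after \eqref{10}: one has to use $S(T\acute{N})\perp ltr\,T\acute{N}$, $S(T\acute{N})\perp S(T\acute{N}^{\bot})$, and especially \eqref{25a} to guarantee that $\breve{J}Z$ remains in the screen distribution; the concluding nondegeneracy argument then works precisely because \eqref{24a} makes $\breve{J}$ an isomorphism from $Rad\,T\acute{N}$ onto $ltr\,T\acute{N}$.
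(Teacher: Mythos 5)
Your argument is correct and lands on exactly the paper's key identity, $\breve{g}\bigl(h^{\ast}(W,\breve{J}Z)-p\,h^{\ast}(W,Z),\breve{J}W'\bigr)=0$, but it gets there by the dual computation. The paper uses metric compatibility to move the derivative onto the screen vector: from $\breve{g}(\breve{\nabla}_{W}U,Z)=-\breve{g}(U,\breve{\nabla}_{W}Z)$ it applies (\ref{23}) and (\ref{21a}) to obtain $\breve{g}(\breve{J}U,\breve{\nabla}_{W}\breve{J}Z)-p\,\breve{g}(\breve{J}U,\breve{\nabla}_{W}Z)=0$ and then reads off $h^{\ast}$ directly from the Gauss formula (\ref{9}) together with (\ref{14}), since $\breve{J}U\in\Gamma(ltr\,T\acute{N})$ kills every component except the $Rad\,T\acute{N}$-valued $h^{\ast}$. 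You instead keep the derivative on the radical vector, push $\breve{J}$ through so that it lands in $ltr\,T\acute{N}$, expand $\breve{\nabla}_{W}\breve{J}W'$ by the Weingarten formula (\ref{10}), and then convert the shape operator back into $h^{\ast}$ via the duality $g(A_{N}W,PU)=\breve{g}(h^{\ast}(W,PU),N)$. Note that the paper's (\ref{17}) literally states this with $h^{s}$ in place of $h^{\ast}$, which is a typo (as written its left side vanishes identically, since $S(T\acute{N}^{\perp})$ is orthogonal to $ltr\,T\acute{N}$, and it would force $A_{N}W$ to have no screen component); the corrected form you use is the standard Duggal--Bejancu identity and is what the argument requires. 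Both routes finish with the same nondegeneracy argument for the pairing of $Rad\,T\acute{N}$ with $ltr\,T\acute{N}$; yours is slightly longer but has the merit of making explicit the steps the paper suppresses, namely why testing against $S(T\acute{N})$ with the degenerate induced metric suffices, why the $\nabla^{\ell}$ and $D^{s}$ terms drop out, and why the final vanishing for all $W'$ is equivalent to $h^{\ast}(W,\breve{J}Z)=p\,h^{\ast}(W,Z)$.
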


\begin{proof}
By using the definition of a lightlike submanifold, it is known that the
radical distribution defines totally geodesic foliation if and only if 
\begin{equation*}
\breve{g}\left( \nabla _{W}U,Z\right) =0,
\end{equation*}%
for $W,U\in \Gamma (Rad\,T\acute{N})$ and $Z\in S(T\acute{N}).$ Since $%
\breve{\nabla}$ is a metric connection, if we use (\ref{9}), (\ref{21a}) and
(\ref{23}), we have 
\begin{equation*}
\breve{g}(\breve{J}U,\breve{\nabla}_{W}\breve{J}Z)-p\breve{g}(\breve{J}U,%
\breve{\nabla}_{W}Z)=0.
\end{equation*}%
Then from (\ref{14}), we get 
\begin{equation*}
\breve{g}\left( \breve{J}U,h^{\ast }(W,\breve{J}Z)-ph^{\ast }(W,Z)\right) =0.
\end{equation*}%
Hence, the proof is completed.
\end{proof}

\begin{theorem}
Let $\acute{N}$ be a radical transversal lightlike submanifold of a locally
Metallic semi-Riemannian manifold $\breve{N}$. Then the screen distribution
defines a totally geodesic foliation if and only if either 
\begin{equation*}
h^{\ast }(W,\breve{J}U)+K_{2}h^{l}(W,\breve{J}U)=p(h^{\ast
}(W,U)+K_{2}h^{l}(W,U)),
\end{equation*}%
or there is no component of $\breve{J}N$ in $ltr\,T\acute{N}$ for $W,U\in
\Gamma (S(T\acute{N})),$ $N\in \Gamma (ltr\,T\acute{N}).$
\end{theorem}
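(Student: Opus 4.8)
The plan is to characterize the totally geodesic foliation condition for $S(T\acute N)$ in the usual way: the screen distribution defines a totally geodesic foliation precisely when $\nabla_W U \in \Gamma(S(T\acute N))$ for all $W,U \in \Gamma(S(T\acute N))$, which by the decomposition $T\acute N = S(T\acute N)\perp Rad\,T\acute N$ is equivalent to $g(\nabla_W U,\xi)=0$ and $\breve g(\nabla_W U, N)=0$ for all $\xi\in\Gamma(Rad\,T\acute N)$ and $N\in\Gamma(ltr\,T\acute N)$. Since $\breve g(\nabla_W U,\xi)=\breve g(\breve\nabla_W U,\xi)$, the first family of conditions is automatic because $\breve g(U,\xi)=0$ and $\breve\nabla$ is metric, leaving only the condition $\breve g(\breve\nabla_W U, N)=0$ to analyze. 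So the real content is to rewrite $\breve g(\breve\nabla_W U, N)$ using the metallic structure.

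First I would use the $\breve J$-compatibility in the form (\ref{23}) to write, for $W,U\in\Gamma(S(T\acute N))$ and $N\in\Gamma(ltr\,T\acute N)$,
\begin{equation*}
p\,\breve g(\breve\nabla_W U, N) = \breve g(\breve J\breve\nabla_W U,\breve J N)-q\,\breve g(\breve\nabla_W U,N) + q\,\breve g(\breve\nabla_W U,N)
\end{equation*}
— more cleanly, apply (\ref{23}) with the pair $(\breve\nabla_W U, N)$ to get $\breve g(\breve J\breve\nabla_W U,\breve J N)=p\,\breve g(\breve J\breve\nabla_W U,N)+q\,\breve g(\breve\nabla_W U,N)$, and then use the local metallic condition (\ref{21a}) to replace $\breve J\breve\nabla_W U$ by $\breve\nabla_W(\breve J U)=\breve\nabla_W(SU)$, since $U\in\Gamma(S(T\acute N))$ forces $\breve J U = SU\in\Gamma(S(T\acute N))$ by (\ref{25a})/(\ref{30a}). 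Then I would expand $\breve\nabla_W(SU)$ by the Gauss formula (\ref{9}) and by (\ref{14}) for the further decomposition $\nabla_W SU = \nabla^\ast_W SU + h^\ast(W,SU)$; pairing against $N$ and against $\breve J N$ and using the definitions of $A_N$, $h^\ast$, $h^\ell$ (and the fact that $\breve g$ kills the $h^s$, $D^s$ terms against $ltr$ directions), the quantity $\breve g(\breve\nabla_W U,N)$ becomes a combination of $\breve g(h^\ast(W,SU),N)$ (equivalently $K_2 h^\ell$-type terms) and a term carrying $\breve J N$. The appearance of $\breve J N$ is where the dichotomy comes from: $\breve J N$ has a priori a component in $ltr\,T\acute N$, in $S(T\acute N)$, and in $S(T\acute N^\perp)$ (by Theorem in §3 there is no $Rad$ component, or one argues this directly), and the pairing only sees the $ltr$ component of $\breve J N$.

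Carrying this through, $\breve g(\breve\nabla_W U,N)=0$ for all such $N$ reduces to: either the coefficient multiplying the $\breve J N$ contribution vanishes identically — which, after writing everything symmetrically in the two displayed $h^\ast + K_2 h^\ell$ expressions evaluated at $(W,\breve J U)$ versus $p\cdot(W,U)$, is exactly the first alternative $h^\ast(W,\breve J U)+K_2 h^\ell(W,\breve J U)=p(h^\ast(W,U)+K_2 h^\ell(W,U))$ — or the $\breve J N$ term is killed for a structural reason, namely $\breve J N$ has no component in $ltr\,T\acute N$, which is the second alternative. I expect the main obstacle to be bookkeeping: one must be careful that $\breve J U\in\Gamma(S(T\acute N))$ so that $h^\ast(W,\breve J U)$ is even defined, track which transversal components survive the pairing with $N$ versus with $\breve J N$, and verify that after using symmetry of $h^\ell$ and $h^\ast$ the two contributions genuinely collapse to the stated "either/or" rather than to some weaker mixed condition. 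Once the identity $\breve g(\breve\nabla_W U,N)= \big[h^\ast(W,\breve J U)+K_2 h^\ell(W,\breve J U)-p(h^\ast(W,U)+K_2 h^\ell(W,U))\big]$-paired-against-$\breve J N$-type-term is isolated, the equivalence is immediate and the proof is complete.
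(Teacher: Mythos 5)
Your proposal follows essentially the same route as the paper's proof: reduce the totally geodesic condition to $\breve{g}(\breve{\nabla}_{W}U,N)=0$, convert this via (\ref{23}), (\ref{21}) and (\ref{21a}) into $\breve{g}(\breve{\nabla}_{W}\breve{J}U,\breve{J}N)-p\,\breve{g}(\breve{\nabla}_{W}U,\breve{J}N)=0$, and then expand with (\ref{9}) and (\ref{14}) so that only the combination $h^{\ast}(W,\breve{J}U)+K_{2}h^{l}(W,\breve{J}U)-p(h^{\ast}(W,U)+K_{2}h^{l}(W,U))$ survives the pairing against $\breve{J}N$, yielding the stated dichotomy. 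This is the paper's argument step for step, so the proposal is correct to the same extent and in the same sense as the published proof.
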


\begin{proof}
Since the screen distribution defines a totally geodesic foliation if and
only if 
\begin{equation*}
\breve{g}\left( \nabla _{W}U,N\right) =0,
\end{equation*}%
for any $W,U\in \Gamma (S(T\acute{N})),$ $N\in \Gamma (ltr\,T\acute{N}).$
Here, if we use (\ref{9}), then we have%
\begin{equation*}
\breve{g}(\breve{\nabla}_{W}U,N)=0.
\end{equation*}%
Also from (\ref{23}) and (\ref{21a}), we have%
\begin{equation*}
\breve{g}(\breve{\nabla}_{W}\breve{J}U,\breve{J}N)-p\breve{g}(\breve{\nabla}%
_{W}U,\breve{J}N)=0.
\end{equation*}%
By using (\ref{9}) and (\ref{14}) in the last equation, we find 
\begin{equation*}
\breve{g}(h^{\ast }(W,\breve{J}U)+K_{2}h^{l}(W,\breve{J}U),\breve{J}N)-p%
\breve{g}(h^{\ast }(W,U)+K_{2}h^{l}(W,U),\breve{J}N)=0.
\end{equation*}%
Therefore, we conclude.
\end{proof}

\section{Transversal Lightlike Submanifolds of Metallic semi-Riemannian
Manifolds}

In this section, we give definition of transversal lightlike submanifolds
and investigate the geometry of distributions.

\begin{definition}
\label{def-tr}Let ($\acute{N},g,S(T\acute{N}),S(T\acute{N}^{\perp }))$ be a
lightlike submanifold of a Metallic semi-Riemannian manifold $(\breve{N},%
\breve{g},\breve{J})$. If the following conditions are satisfied, then the
lightlike submanifold $\acute{N}$ is called transversal lightlike
submanifold:%
\begin{eqnarray*}
\breve{J}Rad\,T\acute{N} &=&ltr\,T\acute{N}, \\
\breve{J}(S(T\acute{N})) &\subseteq &S(T\acute{N}^{\perp }).
\end{eqnarray*}
\end{definition}

We shall denote the orthogonal complement sub bundle to $\breve{J}(S(T\acute{%
N}))$ in $S(T\acute{N}^{\perp })$ by $\mu .$

\begin{proposition}
Let $\acute{N}$ be a transversal lightlike submanifold of a locally Metallic
semi-Riemannian manifold $\breve{N}$. In this case, the distribution $\mu $
is invariant according to $\breve{J}.$
\end{proposition}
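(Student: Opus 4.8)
The plan is to mimic the argument used for Theorem~3.1 (invariance of $S(T\acute N^{\perp})$ in the radical transversal case): show that $\breve J\mu$ has no component in any of the other pieces of the decomposition $T\breve N|_{\acute N}=S(T\acute N)\perp[\,Rad\,T\acute N\oplus ltr\,T\acute N\,]\perp S(T\acute N^{\perp})$, and also no component in $\breve J(S(T\acute N))\subseteq S(T\acute N^{\perp})$, so the only place left for $\breve J\mu$ to live is $\mu$ itself. Throughout I would fix $V\in\Gamma(\mu)$ and test $\breve g(\breve JV,\cdot)$ against a spanning field of each summand, using $\breve J$-compatibility~(\ref{21}) and its quadratic form~(\ref{23}), together with the defining relations $\breve JRad\,T\acute N=ltr\,T\acute N$ and $\breve J(S(T\acute N))\subseteq S(T\acute N^{\perp})$.

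First I would dispose of the easy summands. For $W\in\Gamma(S(T\acute N))$, $\breve g(\breve JV,W)=\breve g(V,\breve JW)=0$ since $\breve JW\in\Gamma(S(T\acute N^{\perp}))$ and $V\perp S(T\acute N^{\perp})$ up to $\breve J(S(T\acute N))$; more precisely $V\in\mu$ is orthogonal to $\breve J(S(T\acute N))$ by definition of $\mu$, so this pairing vanishes and $\breve JV$ has no $S(T\acute N)$-component. For $\xi\in\Gamma(Rad\,T\acute N)$, $\breve g(\breve JV,\xi)=\breve g(V,\breve J\xi)$; here $\breve J\xi\in\Gamma(ltr\,T\acute N)\subseteq\Gamma(tr(T\acute N))$, which is orthogonal to $S(T\acute N^{\perp})\supseteq\mu$, so this is $0$ and $\breve JV$ has no $ltr\,T\acute N$-component. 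Likewise testing against $W'\in\Gamma(\breve J(S(T\acute N)))$, write $W'=\breve JW$ with $W\in\Gamma(S(T\acute N))$; then $\breve g(\breve JV,\breve JW)=p\,\breve g(\breve JV,W)+q\,\breve g(V,W)=0$ by~(\ref{23}) and the two vanishings just obtained together with $V\perp S(T\acute N)$, so $\breve JV$ has no component along $\breve J(S(T\acute N))$ inside $S(T\acute N^{\perp})$.

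The one step that needs the same trick as Theorem~3.1 is ruling out a $Rad\,T\acute N$-component of $\breve JV$, i.e.\ testing against $N\in\Gamma(ltr\,T\acute N)$. We get $\breve g(\breve JV,N)=\breve g(V,\breve JN)$, and $\breve JN$ is \emph{a priori} unknown. Writing $N=\breve J\xi$ for some $\xi\in\Gamma(Rad\,T\acute N)$ (possible since $\breve JRad\,T\acute N=ltr\,T\acute N$) and applying $\breve J$ gives $\breve JN=\breve J^{2}\xi=p\breve J\xi+q\xi=pN+q\xi$, so $\breve g(V,\breve JN)=p\,\breve g(V,N)+q\,\breve g(V,\xi)=0$, since $V\in\mu\subseteq S(T\acute N^{\perp})$ is orthogonal to both $ltr\,T\acute N$ and $Rad\,T\acute N$. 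Hence $\breve JV$ has no $Rad\,T\acute N$-component either. Combining all four vanishings, $\breve JV$ lies in $S(T\acute N^{\perp})$ and is orthogonal to $\breve J(S(T\acute N))$, hence $\breve JV\in\Gamma(\mu)$, proving invariance. (One should also observe $\breve J$ is an isomorphism of $S(T\acute N^{\perp})$ onto itself — clear since $\breve J$ is invertible, $\breve J^{-1}=\tfrac1q(\breve J-pI)$ — so $\breve J\mu=\mu$ rather than merely $\breve J\mu\subseteq\mu$.) The main obstacle is precisely the $\breve JN$ computation: it is the only place where one cannot argue by direct orthogonality and must instead use $\breve J^{2}=p\breve J+qI$ to re-express $\breve JN$ in terms of $N$ and $\xi$; everything else is bookkeeping against the orthogonal decomposition.
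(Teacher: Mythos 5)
Your proposal is correct and follows essentially the same route as the paper: pair $\breve{J}V$ against spanning fields of each summand of $T\breve{N}|_{\acute{N}}$ and of $\breve{J}(S(T\acute{N}))$, using $\breve{J}$-compatibility and the defining relations of a transversal lightlike submanifold. You are in fact slightly more explicit than the paper at the one delicate point, computing $\breve{J}N=pN+q\xi$ to justify $\breve{g}(V,\breve{J}N)=0$, which the paper's proof asserts without detail.
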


\begin{proof}
For $V\in \Gamma \left( \mu \right) ,$ $\xi \in \Gamma (Rad\,T\acute{N})$
and $N\in \Gamma (ltr\,T\acute{N}),$ from (\ref{20}), (\ref{21}) and (\ref%
{23}), we have 
\begin{equation}
\breve{g}(\breve{J}V,\xi )=\breve{g}(V,\breve{J}\xi )=0,  \label{34a}
\end{equation}%
and 
\begin{equation}
\breve{g}(\breve{J}V,N)=\breve{g}(V,\breve{J}N)=0.  \label{35a}
\end{equation}%
Therefore, there is no component of $\breve{J}V$ in $Rad\,T\acute{N}$ and $%
ltr\,T\acute{N}.$

Similarly, for $W\in \Gamma (S(T\acute{N}))$ and $V_{1}\in \Gamma (S(T\acute{%
N}^{\perp })),$ we have 
\begin{equation}
\breve{g}(\breve{J}V,W)=\breve{g}(V,\breve{J}W)=0,  \label{36a}
\end{equation}%
and 
\begin{equation}
\breve{g}(\breve{J}V,V_{1})=\breve{g}(V,\breve{J}V_{1})=0,  \label{37a}
\end{equation}%
which imply that there is no component of $\breve{J}V$ in $S(T\acute{N})$
and $\breve{J}(S(T\acute{N})).$ From (\ref{34a}), (\ref{35a}), (\ref{36a})
and (\ref{37a}), we conclude.
\end{proof}

\begin{proposition}
\label{prop-tr}There does not exist a $1$-lightlike transversal lightlike
submanifold of a locally Metallic semi-Riemannian manifold.
\end{proposition}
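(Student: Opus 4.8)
The plan is a short proof by contradiction that runs exactly as in the non-existence result for $1$-radical transversal lightlike submanifolds, since only the common hypothesis $\breve{J}\,Rad\,T\acute{N}=ltr\,T\acute{N}$ of Definition \ref{def-tr} will be used. Suppose $\acute{N}$ were a $1$-lightlike transversal lightlike submanifold of a locally Metallic semi-Riemannian manifold $\breve{N}$. Then $Rad\,T\acute{N}$ is a rank-one distribution, locally spanned by a null field $\xi$, and $ltr\,T\acute{N}$ is the corresponding rank-one lightlike transversal bundle, locally spanned by $N$, with $\breve{g}(\xi,\xi)=\breve{g}(N,N)=0$ and $\breve{g}(\xi,N)=1$. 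First I would record that $\breve{J}$ is invertible: from (\ref{20}) we get $\breve{J}(\breve{J}-pI)=qI$ with $q$ a positive integer, hence $\breve{J}^{-1}=\tfrac{1}{q}(\breve{J}-pI)$; in particular $\breve{J}\xi$ is nowhere zero, and by Definition \ref{def-tr} it is a section of the rank-one bundle $ltr\,T\acute{N}$, so $\breve{J}\xi=\beta N$ for some nowhere-vanishing function $\beta$.

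The contradiction then comes from evaluating the $\breve{J}$-compatibility identity (\ref{23}) at $W=U=\xi$. Its left-hand side is $\breve{g}(\breve{J}\xi,\breve{J}\xi)=\beta^{2}\breve{g}(N,N)=0$, because $ltr\,T\acute{N}$ is totally null, while its right-hand side is $p\,\breve{g}(\breve{J}\xi,\xi)+q\,\breve{g}(\xi,\xi)=p\beta\,\breve{g}(N,\xi)=p\beta$. Therefore $p\beta=0$, which is impossible since $p\geq 1$ and $\beta$ never vanishes, and the proof is complete.

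I do not expect any genuine obstacle: the statement is a two-line consequence of (\ref{23}), the totally null character of $ltr\,T\acute{N}$, and the invertibility of $\breve{J}$. The only point that calls for a little care is to avoid silently assuming $\breve{J}\xi=N$ on the nose (which would require a frame normalization that need not be available), so I would keep the conformal factor $\beta$ throughout; equivalently, one may argue that (\ref{23}) forces $\breve{g}(\breve{J}\xi,\xi)=0$ whereas $\breve{J}\xi=\beta N$ gives $\breve{g}(\breve{J}\xi,\xi)=\beta\neq 0$, mirroring the $1$-radical case verbatim.
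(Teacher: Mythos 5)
Your proposal is correct and follows essentially the same route as the paper's own proof: both derive $\breve{g}(\breve{J}\xi,\xi)=0$ from (\ref{23}) and the totally null character of $ltr\,T\acute{N}$, and contradict this with $\breve{J}\xi\in\Gamma(ltr\,T\acute{N})$ paired against $\xi$. Your retention of the conformal factor $\beta$ is a minor refinement of the paper's normalization $\breve{J}\xi=N$, not a different argument.
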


\begin{proof}
Assume that $\acute{N}$ is a $1$-lightlike transversal lightlike submanifold
of a locally Metallic semi-Riemannian manifold $\breve{N}$. In this case, $%
Rad\,T\acute{N}=Sp\left\{ \xi \right\} $ and $ltr\,T\acute{N}=Sp\left\{
N\right\} .$ From (\ref{20}) and (\ref{23}), we obtain 
\begin{equation}
\breve{g}(\breve{J}\xi ,\xi )=\breve{g}(\xi ,\breve{J}\xi )=0.  \label{38a}
\end{equation}%
On the other hand, from the fact that $\breve{J}Rad\,T\acute{N}=ltr\,T\acute{%
N}$ , we have $\breve{J}\xi =N\in \Gamma (ltr\,T\acute{N}).$ So, we find%
\begin{equation*}
\breve{g}(\xi ,\breve{J}\xi )=\breve{g}(\xi ,N)=1,
\end{equation*}%
which contradicts with (\ref{38a}). The proof is completed.
\end{proof}

From Definition \ref{def-tr} and Proposition \ref{prop-tr}, we have

\begin{corollary}
Let $\acute{N}$ be a transversal lightlike submanifold of a locally Metallic
semi-Riemannian manifold $\breve{N}$. Then,

\begin{description}
\item[(i)] $\dim (Rad\,T\acute{N})\geq 2,$

\item[(ii)] The transversal lightlike submanifold of $3$-dimensional is $2$%
-lightlike.
\end{description}
\end{corollary}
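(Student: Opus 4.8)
The plan is to derive both parts directly from Definition~\ref{def-tr} together with Proposition~\ref{prop-tr}. The key structural fact is that for any transversal lightlike submanifold the bundle map $\breve{J}$ restricts to an isomorphism $\breve{J}:Rad\,T\acute{N}\to ltr\,T\acute{N}$; since these two bundles have the same rank $r$, the number $r=\dim(Rad\,T\acute{N})$ is well defined and, by the non-degeneracy of the pairing between $Rad\,T\acute{N}$ and $ltr\,T\acute{N}$, satisfies $r\geq 1$. Proposition~\ref{prop-tr} rules out $r=1$, so $r\geq 2$, which is exactly part (i).

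For part (ii), I would count dimensions in the ambient decomposition. Writing $\dim\acute{N}=m$, $\operatorname{codim}\acute{N}=k$, and $r=\dim(Rad\,T\acute{N})$, the defining conditions give $\breve{J}(Rad\,T\acute{N})=ltr\,T\acute{N}$ of rank $r$ and $\breve{J}(S(T\acute{N}))\subseteq S(T\acute{N}^{\perp})$, so in particular $\dim S(T\acute{N})=m-r\leq \dim S(T\acute{N}^{\perp})=k-r$, hence $m\leq k$. When $\dim\acute{N}=m=3$, part (i) forces $r\geq 2$, while the general constraint $r\leq m=3$ together with $r\leq k$ leaves only $r=2$ or $r=3$; but $r=3=m$ would make $\acute{N}$ isotropic or totally lightlike with $S(T\acute{N})=\{0\}$, and then $m=r\le k$ still holds, so one must instead invoke the genericity built into ``transversal lightlike submanifold'': the case $r=3$ collapses $S(T\acute{N})$ and hence is excluded (or trivial), leaving $r=2$, i.e. the submanifold is $2$-lightlike.

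The main obstacle I anticipate is pinning down precisely why $r=3$ is excluded when $m=3$: the cleanest route is to observe that $r=m=3$ forces $S(T\acute{N})=\{0\}$, so the second defining condition $\breve{J}(S(T\acute{N}))\subseteq S(T\acute{N}^{\perp})$ becomes vacuous and the submanifold degenerates; under the standing assumption that we are dealing with a genuine (Case~1) $r$-lightlike transversal submanifold with $r<\min\{m,k\}$, this is impossible, so $r=2$. Thus I would phrase the argument as: by (i), $r\geq 2$; by the Case~1 hypothesis $r<m=3$; hence $r=2$.

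\begin{proof}
\textbf{(i)} Since $\breve{J}Rad\,T\acute{N}=ltr\,T\acute{N}$, the restriction of $\breve{J}$ is an isomorphism from $Rad\,T\acute{N}$ onto $ltr\,T\acute{N}$, so these bundles have the same rank $r=\dim(Rad\,T\acute{N})\geq 1$. If $r=1$, then $\acute{N}$ would be a $1$-lightlike transversal lightlike submanifold, which is impossible by Proposition~\ref{prop-tr}. Hence $\dim(Rad\,T\acute{N})\geq 2$.

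\textbf{(ii)} Let $\acute{N}$ be $3$-dimensional, and set $r=\dim(Rad\,T\acute{N})$. By part (i), $r\geq 2$. Since for an $r$-lightlike submanifold one has $r\leq m=3$, the only possibilities are $r=2$ and $r=3$. If $r=3=m$, then $S(T\acute{N})=\{0\}$, so $\acute{N}$ is isotropic or totally lightlike and the condition $\breve{J}(S(T\acute{N}))\subseteq S(T\acute{N}^{\perp})$ is vacuous; in particular $r<\min\{m,k\}$ fails, contradicting the definition of a (proper) transversal lightlike submanifold. Therefore $r=2$, that is, a $3$-dimensional transversal lightlike submanifold is $2$-lightlike.
\end{proof}
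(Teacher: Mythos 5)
Your proof is correct and takes essentially the same route as the paper, which states this corollary without proof as an immediate consequence of Definition~\ref{def-tr} and Proposition~\ref{prop-tr}: part (i) is exactly the exclusion of $r=1$ combined with $r\geq 1$, and part (ii) is the count $2\leq r<m=3$. Your explicit handling of the case $r=3$ (excluded only by appealing to the proper, Case~1 hypothesis $r<\min\{m,k\}$, since the definition alone does not literally forbid an isotropic submanifold with $S(T\acute{N})=\{0\}$) supplies a detail the paper leaves implicit.
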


\bigskip

Let $\acute{N}$ be a transversal lightlike submanifold of a locally Metallic
semi-Riemannian manifold $\breve{N}$. $Q$ and $T$ are projection morphisms
in $Rad\,T\acute{N}$ and $S(T\acute{N}),$ respectively. For any $W\in \Gamma
(T\acute{N}),$ we can write%
\begin{equation}
W=TW+QW,  \label{39a}
\end{equation}%
where, $TW\in \Gamma (S(T\acute{N}))$ and $QW\in \Gamma (Rad\,T\acute{N}).$
If we applied $\breve{J}$ to (\ref{39a})$,$ we have%
\begin{equation}
\breve{J}W=\breve{J}TW+\breve{J}QW.  \label{40a}
\end{equation}%
By writing $\breve{J}TW=KW$ and $\breve{J}QW=LW,$ the expression (\ref{40a})
is%
\begin{equation}
\breve{J}W=KW+LW.  \label{41a}
\end{equation}%
Here, $KW\in \Gamma (S(T\acute{N}^{\perp }))$ and $LW\in ltr\,T\acute{N}.$
Besides, let $D$ and $E$ be projection morphisms in $\breve{J}S(T\acute{N})$
and $\mu $ in $S(T\acute{N}^{\perp })$, respectively. For $V\in \Gamma (S(T%
\acute{N}^{\perp })),$ we write%
\begin{equation}
V=DV+EV.  \label{42a}
\end{equation}%
By applying $\breve{J}$ \ to (\ref{42a}), we have%
\begin{equation}
\breve{J}V=\breve{J}DV+\breve{J}EV.  \label{43a}
\end{equation}%
If we write $\breve{J}DV=BV$ and $\breve{J}EV=CV,$ expression (\ref{43a})
becomes%
\begin{equation}
\breve{J}V=BV+CV,  \label{43b}
\end{equation}%
where $BV\in \breve{J}S(T\acute{N})\oplus S(T\acute{N}),$ $CV\in \Gamma
\left( \mu \right) .$ Since $\breve{N}$ is a locally Metallic
semi-Riemannian manifold, then from (\ref{9}), (\ref{11})and (\ref{41a}), we
have%
\begin{equation}
\left( 
\begin{array}{c}
-A_{KW}U+\nabla _{U}^{s}KW+D^{l}(U,KW) \\ 
-A_{LW}U+\nabla _{U}^{l}LW+D^{s}(U,LW)%
\end{array}%
\right) =\left( 
\begin{array}{c}
K\nabla _{U}W+L\nabla _{U}W+\breve{J}h^{l}(U,W) \\ 
+Bh^{s}(U,W)+Ch^{s}(U,W)%
\end{array}%
\right) ,  \label{44a}
\end{equation}%
where $U,W\in \Gamma (T\acute{N}).$ For projection morphisms $K_{1}$ and $%
K_{2}$ of $\breve{J}ltr\,T\acute{N}$ in $ltr\,T\acute{N}$ and $Rad\,T\acute{N%
}$\ respectively, we write%
\begin{equation*}
\breve{J}h^{l}(U,W)=K_{1}\breve{J}h^{l}(U,W)+K_{2}\breve{J}h^{l}(U,W).
\end{equation*}%
Also, for projection morphisms $S_{1}$ and $S_{2}$ of $\breve{J}S(T\acute{N}%
^{\perp })$ in $\breve{J}S(T\acute{N})\subseteq S(T\acute{N}^{\perp })$ and $%
S(T\acute{N}),$we have%
\begin{equation*}
Bh^{s}(U,W)=S_{1}Bh^{s}(U,W)+S_{1}Bh^{s}(U,W).
\end{equation*}%
Therefore (\ref{44a}) can be rewritten as 
\begin{equation*}
\left( 
\begin{array}{c}
-A_{KW}U+\nabla _{U}^{s}KW+D^{l}(U,KW) \\ 
-A_{LW}U+\nabla _{U}^{l}LW+D^{s}(U,LW)%
\end{array}%
\right) =\left( 
\begin{array}{c}
K\nabla _{U}W+L\nabla _{U}W+K_{1}\breve{J}h^{l}(U,W) \\ 
+K_{2}\breve{J}h^{l}(U,W)+S_{1}Bh^{s}(U,W) \\ 
+S_{2}Bh^{s}(U,W)+Ch^{s}(U,W)%
\end{array}%
\right) .
\end{equation*}%
If we equate the tangent and transversal parts of the above equation, then
we get 
\begin{eqnarray}
-A_{KW}U-A_{LW}U &=&K_{2}\breve{J}h^{l}(U,W)+S_{2}Bh^{s}(U,W){\small ,}
\label{45a} \\
\nabla _{U}^{s}KW+D^{s}(U,LW) &=&K\nabla _{U}W+S_{1}Bh^{s}(U,W)+Ch^{s}(U,W)%
{\small ,}  \label{46a} \\
D^{l}(U,KW)+\nabla _{U}^{l}LW &=&L\nabla _{U}W+K_{1}\breve{J}h^{l}(U,W)%
{\small .}  \label{47a}
\end{eqnarray}

Now we shall investigate the integrable of the distributions on transversal
lightlike submanifolds.

\begin{theorem}
Let $\acute{N}$ be a transversal lightlike submanifold of a locally Metallic
semi-Riemannian manifold $\breve{N}$. Then the radical distribution is
integrable if and only if 
\begin{equation*}
D^{s}(U,LW)=D^{s}(W,LU),
\end{equation*}%
for $V,W\in \Gamma (Rad\,T\acute{N}).$
\end{theorem}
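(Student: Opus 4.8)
The plan is to detect integrability of $Rad\,T\acute N$ through the screen component of a bracket, and to read that component off from the structure equation (\ref{46a}). First I recall that for $U,W\in\Gamma(Rad\,T\acute N)$ the bracket $[U,W]=\nabla_U W-\nabla_W U$ is automatically a section of $T\acute N$ (the induced connection $\nabla$ being torsion-free, which follows from $\breve\nabla$ being torsion-free together with the symmetry of $h^{\ell},h^{s}$ in (\ref{9})). Since $T\acute N=Rad\,T\acute N\perp S(T\acute N)$, the radical distribution is integrable exactly when the $S(T\acute N)$-component of $[U,W]$ vanishes for all such $U,W$. Because $\breve J$ is invertible, with $\breve J^{-1}=\tfrac1q(\breve J-pI)$ by (\ref{20}), and $\breve J(Rad\,T\acute N)=ltr\,T\acute N$, this is in turn equivalent to $K[U,W]=0$, where $K$ is the projection morphism of (\ref{41a}); indeed $\breve J[U,W]=K[U,W]+L[U,W]$ with $K[U,W]\in\Gamma(S(T\acute N^{\perp}))$ and $L[U,W]\in\Gamma(ltr\,T\acute N)$, so $\breve J[U,W]\in\Gamma(ltr\,T\acute N)$ iff $K[U,W]=0$.

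Next I would specialize (\ref{46a}) to $U,W\in\Gamma(Rad\,T\acute N)$. Such a $W$ has $TW=0$, hence $KW=\breve J TW=0$ and $\nabla^{s}_U KW=0$, so (\ref{46a}) collapses to
\[
D^{s}(U,LW)=K\nabla_U W+S_1 Bh^{s}(U,W)+Ch^{s}(U,W).
\]
Interchanging $U$ and $W$ and subtracting, the terms involving $h^{s}$ drop out because $h^{s}$ is symmetric and $B$, $C$, $S_1$ are linear, and one is left with
\[
D^{s}(U,LW)-D^{s}(W,LU)=K\nabla_U W-K\nabla_W U=K[U,W].
\]

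Combining the two steps gives the statement: $Rad\,T\acute N$ is integrable iff $K[U,W]=0$ for all $U,W\in\Gamma(Rad\,T\acute N)$, iff $D^{s}(U,LW)=D^{s}(W,LU)$. I do not expect any genuine obstacle; the argument follows the same antisymmetrization pattern used in the earlier integrability theorems of this section. The only point that needs a little care is the first step — recognizing that integrability of $Rad\,T\acute N$ is governed by the vanishing of $K[U,W]$ (the screen part of the bracket), and not by $L[U,W]$, which instead measures the radical part of $[U,W]$.
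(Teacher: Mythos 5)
Your proposal is correct and follows essentially the same route as the paper: specialize (\ref{46a}) to radical sections, note $KW=0$ hence $\nabla^{s}_{U}KW=0$, antisymmetrize so the symmetric $h^{s}$ terms cancel, and obtain $D^{s}(U,LW)-D^{s}(W,LU)=K[U,W]$. Your explicit justification that integrability of $Rad\,T\acute{N}$ is equivalent to $K[U,W]=0$ (via invertibility of $\breve{J}$, $\breve{J}^{-1}=\tfrac{1}{q}(\breve{J}-pI)$) is a step the paper leaves implicit, but the argument is the same.
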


\begin{proof}
For $V,W\in \Gamma (Rad\,T\acute{N}),$ from equation (\ref{46a}), by
interchanging roles of $W$ and $U$, we find%
\begin{equation*}
\nabla _{U}^{s}KW-\nabla _{W}^{s}KU+D^{s}(U,LW)-D^{s}(W,LU)-K\left( \nabla
_{U}W-\nabla _{W}U\right) =0,
\end{equation*}%
since $h^{s}$ is symmetric. Also, we have $\nabla _{U}^{s}KW=\nabla
_{W}^{s}KU=0.$ Then, we get%
\begin{equation*}
D^{s}(U,LW)-D^{s}(W,LU)=K\left[ U,W\right] ,
\end{equation*}%
which completes the proof .
\end{proof}

\begin{theorem}
Let $\acute{N}$ be a\ transversal lightlike submanifold of a locally
Metallic semi-Riemannian manifold $\breve{N}$. Then the screen distribution
\ is integrable if and only if 
\begin{equation*}
D^{l}(U,KW)=D^{l}(W,KU),
\end{equation*}%
$W,U\in \Gamma (S(T\acute{N})).$
\end{theorem}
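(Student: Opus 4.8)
The plan is to follow exactly the pattern used just above for the radical distribution, but now starting from the structure equation (\ref{47a}). First I would recall the general fact from lightlike geometry that the screen distribution $S(T\acute{N})$ is integrable if and only if $[U,W]\in\Gamma(S(T\acute{N}))$ for all $U,W\in\Gamma(S(T\acute{N}))$; equivalently, writing $Q$ for the projection morphism onto $Rad\,T\acute{N}$, if and only if $Q[U,W]=0$.

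Next I would specialize (\ref{47a}) to $U,W\in\Gamma(S(T\acute{N}))$. For such fields $QW=QU=0$, hence $LW=\breve{J}QW=0$ and likewise $LU=0$, so the terms $\nabla_{U}^{l}LW$ and $\nabla_{W}^{l}LU$ vanish and (\ref{47a}) reduces to
\[
D^{l}(U,KW)=L\nabla_{U}W+K_{1}\breve{J}h^{l}(U,W).
\]
Interchanging the roles of $U$ and $W$, subtracting, and using that $h^{l}$ is symmetric so that the two $K_{1}\breve{J}h^{l}$ terms cancel, I obtain
\[
D^{l}(U,KW)-D^{l}(W,KU)=L(\nabla_{U}W-\nabla_{W}U)=L[U,W].
\]

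To finish, I would note that $L[U,W]=\breve{J}Q[U,W]$ by the definition of the operator $L$, and that $\breve{J}$ is invertible — indeed (\ref{20}) gives $\breve{J}(\breve{J}-pI)=qI$ with $q>0$, so $\breve{J}$ maps $Rad\,T\acute{N}$ isomorphically onto $ltr\,T\acute{N}$ — whence $L[U,W]=0$ if and only if $Q[U,W]=0$. Combining this with the first step, $S(T\acute{N})$ is integrable precisely when $D^{l}(U,KW)=D^{l}(W,KU)$ for all $U,W\in\Gamma(S(T\acute{N}))$, as claimed.

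The computation is essentially a routine specialization of (\ref{47a}), so I do not expect a genuine obstacle; the only step deserving a line of justification is the equivalence $L[U,W]=0\Leftrightarrow Q[U,W]=0$, which rests on the invertibility of $\breve{J}$ (equivalently, on $\breve{J}$ carrying $Rad\,T\acute{N}$ isomorphically onto $ltr\,T\acute{N}$).
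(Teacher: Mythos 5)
Your proof is correct and follows essentially the same route as the paper: specialize (\ref{47a}) to $U,W\in\Gamma(S(T\acute{N}))$ where $LU=LW=0$, antisymmetrize, cancel the $K_{1}\breve{J}h^{l}$ terms by symmetry of $h^{l}$, and read off $D^{l}(U,KW)-D^{l}(W,KU)=L[U,W]$. Your closing remark that $L[U,W]=0$ is equivalent to $Q[U,W]=0$ because $\breve{J}$ carries $Rad\,T\acute{N}$ isomorphically onto $ltr\,T\acute{N}$ is a detail the paper leaves implicit, and it is a worthwhile addition.
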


\begin{proof}
From (\ref{47a}), the fact that $h^{l}$ is symmetric and $LW=LU=0$, we have%
\begin{equation*}
D^{l}(U,KW)-D^{l}(W,KU)=L\left[ U,W\right] ,
\end{equation*}%
by interchanging the roles of $W,U\in \Gamma (S(T\acute{N})).$ Thus, the
proof is completed.
\end{proof}

\begin{theorem}
Let $\acute{N}$ be a\ transversal lightlike submanifold of a locally
metallic semi-Riemannian manifold $\breve{N}$. Then the screen distribution
defines a totally geodesic foliation if and only if $D^{l}(W,\breve{J}%
U)=-ph^{l}(W,U)$, $h^{\ast }(W,U)=0$ and there is no component of $A_{\breve{%
J}U}W$ in $Rad\,T\acute{N},$ for $W,U\in \Gamma (S(T\acute{N}))$, $N\in
\Gamma (ltr\,T\acute{N}).$
\end{theorem}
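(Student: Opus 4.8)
The plan is to reduce the totally-geodesic-foliation property of $S(T\acute{N})$ to a single orthogonality relation and then to unfold that relation with the locally Metallic structure, in the spirit of the earlier totally-geodesic-foliation theorems of Sections~3 and~4.

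Since $S(T\acute{N})$ is non-degenerate and $T\acute{N}=S(T\acute{N})\perp Rad\,T\acute{N}$, the screen distribution defines a totally geodesic foliation precisely when $\nabla _{W}U\in \Gamma (S(T\acute{N}))$ for all $W,U\in \Gamma (S(T\acute{N}))$; because the pairing between $Rad\,T\acute{N}$ and $ltr\,T\acute{N}$ is non-degenerate while $h^{l}(W,U)$ is null and $h^{s}(W,U)\perp ltr\,T\acute{N}$, this is equivalent, via (\ref{9}), to $\breve{g}(\breve{\nabla}_{W}U,N)=0$ for all such $W,U$ and all $N\in \Gamma (ltr\,T\acute{N})$. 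I would record three auxiliary facts to be used below: $\breve{J}U=KU\in \Gamma (S(T\acute{N}^{\bot }))$ and $LU=0$ for $U\in \Gamma (S(T\acute{N}))$ (from Definition~\ref{def-tr}); $\breve{J}N=pN+q\xi $ whenever $\breve{J}\xi =N$ (from $\breve{J}^{2}=p\breve{J}+qI$), so $\breve{J}$ does not preserve $ltr\,T\acute{N}$; and $K_{1}\breve{J}h^{l}(W,U)=p\,h^{l}(W,U)$ (same reason, since $h^{l}(W,U)\in ltr\,T\acute{N}=\breve{J}(Rad\,T\acute{N})$).

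Next I would rewrite $\breve{g}(\breve{\nabla}_{W}U,N)$. Using $\breve{g}(U,N)=0$, the metricity of $\breve{\nabla}$, (\ref{21a}) and (\ref{23}) to move $\breve{J}$ across $\breve{\nabla}$, together with the Weingarten equation (\ref{11}) in the form $\breve{\nabla}_{W}KU=-A_{KU}W+\nabla _{W}^{s}KU+D^{l}(W,KU)$ and the Gauss equation (\ref{14}) in the form $\nabla _{W}U=\nabla _{W}^{\ast }U+h^{\ast }(W,U)$, one expresses $\breve{g}(\breve{\nabla}_{W}U,N)$ as a sum of scalars. Throwing away everything that vanishes by orthogonality — $S(T\acute{N}^{\bot })\perp T\acute{N}$, $ltr\,T\acute{N}$ totally null, and $A_{KU}W\in \Gamma (T\acute{N})$ so that $\breve{g}(A_{KU}W,\xi )=0$ for $\xi \in \Gamma (Rad\,T\acute{N})$ — one is left with exactly three pieces: the piece recording the $Rad\,T\acute{N}$–component of $A_{\breve{J}U}W$ (also obtainable from (\ref{13}) and (\ref{45a})), the piece $D^{l}(W,\breve{J}U)+p\,h^{l}(W,U)$ that one reads off after comparing with (\ref{47a}) and using $K_{1}\breve{J}h^{l}=p\,h^{l}$, and the piece $h^{\ast }(W,U)$ coming from (\ref{14}). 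Hence $\breve{g}(\breve{\nabla}_{W}U,N)=0$ for every $N$ is equivalent to the simultaneous vanishing of these three pieces, i.e.\ to $D^{l}(W,\breve{J}U)=-p\,h^{l}(W,U)$, $h^{\ast }(W,U)=0$, and $A_{\breve{J}U}W$ having no component in $Rad\,T\acute{N}$. Conversely, each of these identities annihilates the corresponding piece, which closes the argument.

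The step I expect to be delicate is the unfolding in the previous paragraph: because $\breve{J}$ interchanges $Rad\,T\acute{N}$ with $ltr\,T\acute{N}$ and only fixes $S(T\acute{N}^{\bot })$ modulo the shift $\breve{J}N=pN+q\xi $, the expansion of $\breve{g}(\breve{\nabla}_{W}\breve{J}U,\breve{J}N)$ simultaneously generates contributions sitting in $Rad\,T\acute{N}$, in $ltr\,T\acute{N}$ and in $S(T\acute{N}^{\bot })$. Disentangling them correctly — in particular realizing that $h^{l}$ enters only through the identity $K_{1}\breve{J}h^{l}=p\,h^{l}$, that the screen second fundamental form $h^{\ast }$ enters only through (\ref{14}), and that $A_{\breve{J}U}W$ enters only through its radical part — is what produces the three conditions in the stated form rather than a single amalgamated identity.
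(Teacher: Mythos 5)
Your proposal follows essentially the same route as the paper's proof: reduce the totally geodesic foliation condition to $\breve{g}(\breve{\nabla}_{W}U,N)=0$, transfer $\breve{J}$ across $\breve{\nabla}$ via \eqref{21a} and \eqref{23}, expand with the Gauss--Weingarten formulas \eqref{9}, \eqref{11} and the screen decomposition \eqref{14}, and read off the three surviving pieces. Your explicit bookkeeping of how $\breve{J}N=pN+q\xi$ spreads the contributions over $Rad\,T\acute{N}$ and $ltr\,T\acute{N}$ is, if anything, slightly more careful than the paper's one-line conclusion.
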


\begin{proof}
By the definition of a lightlike submanifold, it is known that $S(T\acute{N})
$ defines a totally geodesic foliation if and only if 
\begin{equation*}
\breve{g}\left( \nabla _{W}U,N\right) =0,
\end{equation*}%
where $W,U\in \Gamma (S(T\acute{N}))$ and $N\in \Gamma (ltr\,T\acute{N}).$If
we use (\ref{21}), (\ref{21a}) and (\ref{23}), we find%
\begin{equation*}
0=\breve{g}(\breve{\nabla}_{W}\breve{J}U,\breve{J}N)-p\breve{g}(\breve{\nabla%
}_{W}U,\breve{J}N).
\end{equation*}%
Since $\breve{J}U\in \Gamma (S(T\acute{N}))$, from equation (\ref{9}) and (%
\ref{11}), we have%
\begin{equation*}
\breve{g}(-A_{\breve{J}U}W+D^{l}(W,\breve{J}U),\breve{J}N)-p\breve{g}(\nabla
_{W}U+h^{l}(W,U),\breve{J}N)=0.
\end{equation*}%
Then by using (\ref{14}), we obtain%
\begin{equation*}
\breve{g}(-A_{\breve{J}U}W+D^{l}(W,\breve{J}U)-ph^{\ast }(W,U)+ph^{l}(W,U),%
\breve{J}N)=0,
\end{equation*}%
which completes the proof.
\end{proof}

\begin{theorem}
Let $\acute{N}$ be a transversal lightlike submanifold of a locally metallic
semi-Riemannian manifold $\breve{N}$. Then the radical distribution defines
a totally geodesic foliation if and only if there is no component in $Rad\,T%
\acute{N}$ of $A_{\breve{J}Z}W,$ that is, either $K_{2}\breve{J}h^{l}(W,Z)=0$
or $-A_{\breve{J}Z}W=S_{2}Bh^{s}(W,Z),$ for $W,U\in \Gamma (Rad\,T\acute{N})$%
, $Z\in \Gamma (S(T\acute{N})).$
\end{theorem}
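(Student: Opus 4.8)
The plan is to translate the foliation condition into an orthogonality statement, carry it through the metallic structure with the Gauss--Weingarten machinery of the transversal lightlike submanifold, and finally read off the explicit form from the decomposition (\ref{45a}).

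Recall from lightlike geometry that $Rad\,T\acute{N}$ defines a totally geodesic foliation on $\acute{N}$ if and only if $\breve{g}(\nabla _{W}U,Z)=0$ for all $W,U\in \Gamma (Rad\,T\acute{N})$ and all $Z\in \Gamma (S(T\acute{N}))$, since $S(T\acute{N})$ is a nondegenerate complement of $Rad\,T\acute{N}$ in $T\acute{N}$. By the Gauss formula (\ref{9}), and because $h^{l}$ and $h^{s}$ are orthogonal to $S(T\acute{N})$, this equals $\breve{g}(\breve{\nabla}_{W}U,Z)$. Now $\breve{J}U\in \Gamma (ltr\,T\acute{N})$ and $\breve{J}Z\in \Gamma (S(T\acute{N}^{\perp }))$ by Definition~\ref{def-tr}, and these subbundles pair with $Rad\,T\acute{N}$ and $S(T\acute{N})$ in the standard way; applying the $\breve{J}$-compatibility (\ref{23}) together with the parallelism (\ref{21a}) I would obtain $q\,\breve{g}(\breve{\nabla}_{W}U,Z)=\breve{g}(\breve{\nabla}_{W}\breve{J}U,\breve{J}Z)-p\,\breve{g}(\breve{\nabla}_{W}\breve{J}U,Z)$.

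Next I would expand $\breve{\nabla}_{W}\breve{J}U$ by the Weingarten formula (\ref{10}) for the lightlike transversal field $\breve{J}U$: the term $\nabla _{W}^{l}\breve{J}U$ is orthogonal to both $Z$ and $\breve{J}Z$, the term $D^{s}(W,\breve{J}U)$ pairs only with $\breve{J}Z$, and $A_{\breve{J}U}W$ pairs only with $Z$. Using (\ref{13}) to rewrite $\breve{g}(D^{s}(W,\breve{J}U),\breve{J}Z)=\breve{g}(\breve{J}U,A_{\breve{J}Z}W)$ and (\ref{17}) to discard the remaining $\breve{g}(A_{\breve{J}U}W,Z)$ term (its right-hand side being a pairing of $h^{s}$ with a lightlike transversal vector, hence zero), one is left with $q\,\breve{g}(\nabla _{W}U,Z)=\breve{g}(\breve{J}U,A_{\breve{J}Z}W)$. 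Since $\breve{J}$ carries $Rad\,T\acute{N}$ onto $ltr\,T\acute{N}$, $\breve{g}$ is nondegenerate between $Rad\,T\acute{N}$ and $ltr\,T\acute{N}$, and $A_{\breve{J}Z}W\in \Gamma (T\acute{N})$, letting $U$ vary shows that $\breve{g}(\nabla _{W}U,Z)=0$ for all such $U$ if and only if $A_{\breve{J}Z}W$ has no component in $Rad\,T\acute{N}$; this is the asserted characterization.

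For the explicit reformulation I would substitute into (\ref{45a}) the first argument $Z\in \Gamma (S(T\acute{N}))$, for which $KZ=\breve{J}Z$ and $LZ=0$, and the second argument $W\in \Gamma (Rad\,T\acute{N})$; then (\ref{45a}) becomes $-A_{\breve{J}Z}W=K_{2}\breve{J}h^{l}(W,Z)+S_{2}Bh^{s}(W,Z)$, in which $K_{2}\breve{J}h^{l}(W,Z)\in \Gamma (Rad\,T\acute{N})$ is the radical part and $S_{2}Bh^{s}(W,Z)\in \Gamma (S(T\acute{N}))$ the screen part, by the definitions of $K_{2}$ and of $B,S_{1},S_{2}$ coming from (\ref{41a}) and (\ref{43b}). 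Hence the condition that $A_{\breve{J}Z}W$ has no component in $Rad\,T\acute{N}$ is equivalent to $K_{2}\breve{J}h^{l}(W,Z)=0$, and then also to $-A_{\breve{J}Z}W=S_{2}Bh^{s}(W,Z)$. The main obstacle will be the component bookkeeping in the middle step: being careful about which of the connection and second-fundamental-form terms survive when one pairs with the lightlike transversal vector $\breve{J}U$ as opposed to the screen transversal vector $\breve{J}Z$, so that the expression genuinely collapses to the single term $\breve{g}(\breve{J}U,A_{\breve{J}Z}W)$ with no residual $D^{l}$ or $h^{\ast }$ contribution.
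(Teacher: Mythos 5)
Your overall strategy is the same as the paper's: reduce total geodesy of the radical foliation to $\breve{g}(\nabla _{W}U,Z)=0$, push this through the metallic compatibility (\ref{23}) and the parallelism (\ref{21a}) via the Gauss--Weingarten formulas, and then identify the resulting condition with the vanishing of the radical part of $A_{\breve{J}Z}W$ read off from (\ref{45a}). The one real difference of route is that you transfer $\breve{J}$ onto $U$ and differentiate the lightlike transversal field $\breve{J}U$ using (\ref{10}) and (\ref{13}), whereas the paper transfers $\breve{J}$ onto $Z$ and differentiates the screen transversal field $\breve{J}Z$ using (\ref{11}) and (\ref{14}). Your justification of the ``that is'' clause from (\ref{45a}) is more explicit and more convincing than the paper's (``by changing the roles of $U$, $W$ and taking $U=Z$'').

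There is, however, a gap, and it sits exactly where you flagged the danger. The term $p\,\breve{g}(A_{\breve{J}U}W,Z)$ does not vanish. You discard it by reading (\ref{17}) literally as $g(h^{s}(W,PU),N)=g(A_{N}W,PU)$ and observing that $h^{s}$ is orthogonal to $ltr\,T\acute{N}$; but read that way, (\ref{17}) would force $A_{N}W$ to have no screen component for every $N$ and $W$, which is absurd --- (\ref{17}) is a misprint for the standard identity $\breve{g}(h^{\ast }(W,PU),N)=g(A_{N}W,PU)$. Consequently $\breve{g}(A_{\breve{J}U}W,Z)=\breve{g}(\breve{J}U,h^{\ast }(W,Z))$, and your key identity is actually $q\,g(\nabla _{W}U,Z)=\breve{g}(\breve{J}U,A_{\breve{J}Z}W)+p\,\breve{g}(\breve{J}U,h^{\ast }(W,Z))$ --- which is precisely the equation the paper itself reaches by the other route. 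The residual $h^{\ast }$ term means the clean equivalence with ``$A_{\breve{J}Z}W$ has no component in $Rad\,T\acute{N}$'' does not follow without a further argument; the paper does not supply one either (it silently drops the term when stating its conclusion), so you are no worse off than the source, but your writeup presents the collapse to the single term $\breve{g}(\breve{J}U,A_{\breve{J}Z}W)$ as established when it is not.
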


\begin{proof}
The radical distribution defines a totally geodesic foliation if and only if 
\begin{equation*}
\breve{g}\left( \nabla _{W}U,Z\right) =0,
\end{equation*}%
for $W,U\in \Gamma (Rad\,T\acute{N})$ and $Z\in S(T\acute{N}).$ From (\ref{9}%
), we find 
\begin{equation*}
\breve{g}\left( \nabla _{W}U,Z\right) =\breve{g}(\breve{\nabla}_{W}U,Z)=0.
\end{equation*}%
Since, $\breve{\nabla}$ is a metric connection, from (\ref{21}), (\ref{21a})
and (\ref{23}), we have%
\begin{equation*}
0=-\breve{g}(\breve{J}U,\breve{\nabla}_{W}\breve{J}Z)+p\breve{g}(\breve{J}U,%
\breve{\nabla}_{W}Z).
\end{equation*}%
For $\breve{J}Z\in \Gamma (S(T\acute{N}^{\perp })),$ from (\ref{11}) and (%
\ref{14}), we get%
\begin{equation*}
0=\breve{g}(\breve{J}U,A_{\breve{J}Z}W)+p\breve{g}(\breve{J}U,h^{\ast
}(W,Z)).
\end{equation*}%
Here, since $\breve{J}U\in \Gamma (ltr\,T\acute{N}),$ we conclude that
either there is no component of $A_{\breve{J}Z}W$ in $Rad\,T\acute{N}$ or by
changing the roles of $U$, $W$ and taking $U=Z,$ we have $K_{2}\breve{J}%
h^{l}(W,Z)=0,$ by virtue of%
\begin{equation*}
-A_{\breve{J}Z}W=K_{2}\breve{J}h^{l}(W,Z)+S_{2}Bh^{s}(W,Z).
\end{equation*}
\end{proof}

\begin{theorem}
Let $\acute{N}$ be a transversal lightlike submanifold of a locally Metallic
semi-Riemannian manifold $\breve{N}$. Then the induced connection on $\acute{%
N}$ is a metric connection if and only if 
\begin{equation*}
Q_{1}\breve{J}D^{s}(W,\breve{J}\xi )=pM_{1}\breve{J}h^{s}(W,\xi ),
\end{equation*}%
for $W,U\in \Gamma (T\acute{N}),\xi \in \Gamma (Rad\,T\acute{N}).$
\end{theorem}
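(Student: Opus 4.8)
The plan is to reduce the assertion to the standard criterion for the induced connection to be metric, and then to read that criterion off from the locally Metallic condition by projecting onto $S(T\acute{N})$. Recall that $\nabla$ is a metric connection on $\acute{N}$ precisely when $Rad\,T\acute{N}$ is a parallel distribution with respect to $\nabla$, equivalently $A^{*}_{\xi}\equiv 0$ (standard, cf. \cite{DB}; it follows at once from $(\ref{19})$ together with $(\ref{16})$ and $(\ref{18})$), i.e. when the $S(T\acute{N})$-component of $\nabla_{W}\xi$ vanishes for all $W\in\Gamma(T\acute{N})$ and $\xi\in\Gamma(Rad\,T\acute{N})$. Thus it suffices to produce a formula for this $S(T\acute{N})$-component in terms of $D^{s}$ and $h^{s}$, from which the claimed equivalence follows because $q$ is a positive integer, hence nonzero.

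To produce that formula I would begin with the locally Metallic identity $(\ref{21a})$, $\breve{\nabla}_{W}\breve{J}\xi=\breve{J}\breve{\nabla}_{W}\xi$. Since $\breve{J}\,Rad\,T\acute{N}=ltr\,T\acute{N}$, the left-hand side expands by the Weingarten formula $(\ref{10})$ (with $N=\breve{J}\xi$) as $-A_{\breve{J}\xi}W+\nabla^{\ell}_{W}\breve{J}\xi+D^{s}(W,\breve{J}\xi)$, while the right-hand side expands by the Gauss formula $(\ref{9})$ as $\breve{J}\nabla_{W}\xi+\breve{J}h^{\ell}(W,\xi)+\breve{J}h^{s}(W,\xi)$; denote the resulting equality by $(\star)$. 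Applying $\breve{J}$ once more to $(\star)$ and using $\breve{J}^{2}=p\breve{J}+qI$ on each of the three terms of the right-hand side, the piece $p\bigl(\breve{J}\nabla_{W}\xi+\breve{J}h^{\ell}(W,\xi)+\breve{J}h^{s}(W,\xi)\bigr)$ can be rewritten, again by $(\star)$, as $p\bigl(-A_{\breve{J}\xi}W+\nabla^{\ell}_{W}\breve{J}\xi+D^{s}(W,\breve{J}\xi)\bigr)$; this yields a second identity $(\star\star)$ in which $\nabla_{W}\xi$ and $h^{\ell}(W,\xi)$ occur undifferentiated, each with coefficient $q$. Now project $(\star\star)$ onto $S(T\acute{N})$. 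Using the hypotheses $\breve{J}(S(T\acute{N}))\subseteq S(T\acute{N}^{\perp})$ and $\breve{J}\,Rad\,T\acute{N}=ltr\,T\acute{N}$, together with their consequences $\breve{J}(ltr\,T\acute{N})\subseteq Rad\,T\acute{N}\oplus ltr\,T\acute{N}$ and $\breve{J}(S(T\acute{N}^{\perp}))\subseteq S(T\acute{N}^{\perp})\oplus S(T\acute{N})$ (both forced by $\breve{J}^{2}=p\breve{J}+qI$), one finds that every term has no $S(T\acute{N})$-component except $q$ times the $S(T\acute{N})$-part of $\nabla_{W}\xi$, the $S(T\acute{N})$-part $Q_{1}\breve{J}D^{s}(W,\breve{J}\xi)$ of $\breve{J}D^{s}(W,\breve{J}\xi)$, and $-p$ times the $S(T\acute{N})$-part of $A_{\breve{J}\xi}W$. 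Finally, projecting $(\star)$ itself onto $S(T\acute{N})$ — equivalently, taking the $S(T\acute{N})$-component of $(\ref{45a})$ specialised to $\xi\in\Gamma(Rad\,T\acute{N})$ — identifies the $S(T\acute{N})$-part of $-A_{\breve{J}\xi}W$ with $M_{1}\breve{J}h^{s}(W,\xi)$; substituting this into the previous relation gives $q\,(\nabla_{W}\xi)_{S(T\acute{N})}=Q_{1}\breve{J}D^{s}(W,\breve{J}\xi)-p\,M_{1}\breve{J}h^{s}(W,\xi)$, and since $q\neq 0$ the left side vanishes for all $W,\xi$ if and only if the displayed identity of the theorem holds, which is the claim.

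The one genuinely delicate step is the component bookkeeping when projecting $(\star\star)$ onto $S(T\acute{N})$: for each of the six terms one must decide, using only the inclusions above, which of the summands $S(T\acute{N})$, $Rad\,T\acute{N}$, $ltr\,T\acute{N}$, $\breve{J}(S(T\acute{N}))$, $\mu$ of $T\breve{N}|_{\acute{N}}$ it can contribute to after $\breve{J}$ is applied. The subtle point, and the one easiest to get wrong, is that $\breve{J}(S(T\acute{N}^{\perp}))$ genuinely acquires an $S(T\acute{N})$-component — it comes from the $qI$ summand in $\breve{J}^{2}=p\breve{J}+qI$ — since that is exactly the component on which the statement rests; everything else is orthogonal direct-sum chasing. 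A minor preliminary point worth stating carefully is the precise form of the metric-connection criterion being used, namely that $\nabla$ is metric if and only if $\nabla_{W}\xi\in\Gamma(Rad\,T\acute{N})$ for all $W,\xi$, which is what makes the projection onto $S(T\acute{N})$ the relevant one.
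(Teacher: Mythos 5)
Your proposal is correct and follows essentially the same route as the paper: expand the locally Metallic identity $\breve{\nabla}_{W}\breve{J}\xi=\breve{J}\breve{\nabla}_{W}\xi$ by the Gauss--Weingarten formulas, apply $\breve{J}$ again with $\breve{J}^{2}=p\breve{J}+qI$, and read off the $S(T\acute{N})$-component to get $q(\nabla_{W}\xi)_{S(T\acute{N})}=Q_{1}\breve{J}D^{s}(W,\breve{J}\xi)-pM_{1}\breve{J}h^{s}(W,\xi)$. The only (harmless) differences are that you back-substitute the first identity and invoke $(\ref{45a})$ to handle the $p\breve{J}(\cdot)$ terms where the paper decomposes them directly via the projection morphisms $K_{2}$ and $M_{1}$, and that you make explicit the metric-connection criterion $\nabla_{W}\xi\in\Gamma(Rad\,T\acute{N})$ that the paper uses tacitly.
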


\begin{proof}
For $W,U\in \Gamma (T\acute{N})$ and $\xi \in \Gamma (Rad\,T\acute{N}),$we
have%
\begin{equation*}
\breve{\nabla}_{W}\breve{J}\xi =\breve{J}\breve{\nabla}_{W}\xi .
\end{equation*}%
From equations (\ref{9}) and (\ref{10}), we write%
\begin{equation*}
-A_{\breve{J}\xi }W+\nabla _{W}^{l}\breve{J}\xi +D^{s}(W,\breve{J}\xi )=%
\breve{J}\left( \nabla _{W}\xi +h^{l}(W,\xi )+h^{s}(W,\xi )\right) .
\end{equation*}%
If we apply $\breve{J}$ to the above equation and use (\ref{20}), (\ref{41a}%
) and (\ref{43b}) we obtain 
\begin{equation}
\text{$\left( 
\begin{array}{c}
-KA_{\breve{J}\xi }W-LA_{\breve{J}\xi }W \\ 
+T_{1}\breve{J}\nabla _{W}^{l}\breve{J}\xi +T_{2}\breve{J}\nabla _{W}^{l}%
\breve{J}\xi  \\ 
+Q_{1}\breve{J}D^{s}\left( W,\breve{J}\xi \right) +Q_{2}\breve{J}D^{s}\left(
W,\breve{J}\xi \right) 
\end{array}%
\right) $=$\left( 
\begin{array}{c}
p\breve{J}\nabla _{W}\xi +q\nabla _{W}\xi + \\ 
p\breve{J}h^{l}(W,\xi )+qh^{l}(W,\xi ) \\ 
+p\breve{J}h^{s}(W,\xi )+qh^{s}(W,\xi )%
\end{array}%
\right) ,$}  \label{48b}
\end{equation}%
for $V\in $ $\Gamma (S(T\acute{N}^{\perp })),$ where $T_{1}$ and $T_{2}$ are
projection morphisms of $\breve{J}\nabla _{W}^{l}\breve{J}\xi $ in $Rad\,T%
\acute{N}$ and $ltr\,T\acute{N},$ respectively. Then we have 
\begin{equation*}
\breve{J}\nabla _{W}^{l}\breve{J}\xi =T_{1}\breve{J}\nabla _{W}^{l}\breve{J}%
\xi +T_{2}\breve{J}\nabla _{W}^{l}\breve{J}\xi .
\end{equation*}%
Also, for projection morphisms $M_{1}$ and $M_{2}$ are of $\breve{J}%
h^{s}(W,\xi )$ in $S(T\acute{N})$ and $\breve{J}S(T\acute{N}),$
respectively, then we have%
\begin{equation*}
\breve{J}h^{s}(W,\xi )=M_{1}\breve{J}h^{s}(W,\xi )+M_{2}\breve{J}h^{s}(W,\xi
).
\end{equation*}%
Additionally, we get 
\begin{equation*}
\breve{J}D^{s}(W,\breve{J}\xi )=Q_{1}\breve{J}D^{s}(W,\breve{J}\xi )+Q_{2}%
\breve{J}D^{s}(W,\breve{J}\xi ),
\end{equation*}%
where $Q_{1}$ and $Q_{2}$ are projection morphisms of $\breve{J}D^{s}(W,%
\breve{J}\xi )$ in $S(T\acute{N})$ and $S(T\acute{N}^{\perp }),$
respectively. By equating tangent parts in equation (\ref{48b}), we find 
\begin{equation*}
\text{$\frac{1}{q}$}\left( T_{1}\breve{J}\nabla _{W}^{l}\breve{J}\xi +Q_{1}%
\breve{J}D^{s}(W,\breve{J}\xi )-pM_{1}\breve{J}h^{s}(W,\xi )-pK_{2}\breve{J}%
h^{l}(W,\xi )\right) =\nabla _{W}\xi .
\end{equation*}%
Therefore $\nabla _{W}\xi $ is belong to $Rad\,T\acute{N}$ if and only if 
\begin{equation*}
Q_{1}\breve{J}D^{s}(W,\breve{J}\xi )=pM_{1}\breve{J}h^{s}(W,\xi ).
\end{equation*}%
This completes the proof.
\end{proof}

\begin{example}
Let $(\breve{N}=\mathbb{R}_{2}^{5},\breve{g},\breve{J})$ be the $5$%
-dimensional semi-Euclidean space with the semi-Euclidean metric of
signature $(-,+,-,+,+)$ and the structure $\breve{J}$ given by%
\begin{equation*}
\breve{J}(x_{1},x_{2},x_{3},x_{4},x_{5})=(\left( p-\sigma \right)
x_{1},\sigma x_{2},\left( p-\sigma \right) x_{3},\sigma x_{4},\sigma x_{5}),
\end{equation*}
where $(x_{1},x_{2},x_{3},x_{4},x_{5})$ is the standard coordinate system of 
$\mathbb{R}_{2}^{5}$ . If we take $\sigma =\frac{p+\sqrt{p^{2}+4q}}{2},$
then we have 
\begin{equation*}
\breve{J}^{2}=p\breve{J}+qI,
\end{equation*}%
which implies $\breve{J}$ is a metallic structure on $\mathbb{R}_{2}^{5}.$
Hence,$(\breve{N}=\mathbb{R}_{2}^{5},\breve{g},\breve{J})$ is a Metallic
semi-Riemannian manifold. Let $\acute{N}$ be a submanifold in $\breve{N}$
defined by%
\begin{equation*}
x_{2}=0,\quad x_{4}=\sigma x_{1}+\sigma x_{3}
\end{equation*}%
Then, $T\acute{N}=Sp\{W_{1},W_{2},W_{3}\},$ where%
\begin{equation*}
W_{1}=\frac{\partial }{\partial x_{1}}+\sigma \frac{\partial }{\partial x_{4}%
},\quad W_{2}=\frac{\partial }{\partial x_{3}}+\sigma \frac{\partial }{%
\partial x_{4}},\text{\quad }W_{3}=\frac{\partial }{\partial x_{5}}.
\end{equation*}%
It is easy to check that $\acute{N}$ is a lightlike submanifold. Therefore,%
\begin{eqnarray*}
Rad\,T\acute{N} &=&Sp\{\xi =\sigma W_{1}-\sigma W_{2}+\sigma \sqrt{2}W_{3}\},
\\
ltr\,T\acute{N} &=&Sp\left\{ N=\frac{1}{2\sigma ^{2}\left( 2\sigma -p\right) 
}\left( \left( p-\sigma \right) \frac{\partial }{\partial x_{1}}-\left(
p-\sigma \right) \frac{\partial }{\partial x_{3}}+\sigma \sqrt{2}\frac{%
\partial }{\partial x_{5}}\right) \right\} , \\
S(T\acute{N}) &=&Sp\{W_{3}\},
\end{eqnarray*}%
and we have%
\begin{equation*}
N=\frac{1}{4\sigma ^{3}}\breve{J}\xi ,
\end{equation*}%
for $p=0.$ That is, $\breve{J}\xi \in \Gamma (ltr\,T\acute{N})$ and $\breve{J%
}W_{3}=\sigma W_{3}\in S(T\acute{N})$. Therefore, $\acute{N}$ is a radical
transversal lightlike submanifold of $(\breve{N}=\mathbb{R}_{2}^{5},\breve{g}%
,\breve{J})$.
\end{example}

\end{document}